\title{Subadjunction for quasi-log canonical pairs and its applications}
\author{Osamu Fujino}
\date{2020/8/31, version 0.21}
\subjclass[2010]{Primary 14J17; Secondary 14E30}
\keywords{subadjunction, quasi-log canonical pairs, 
simply connectedness, rationally chain connectedness, 
Fano varieties, cone theorem, lengths of extremal rational curves, 
Mori hyperbolicity}
\address{Department of Mathematics, Graduate School of Science, 
Osaka University, Toyonaka, Osaka 560-0043, Japan}
\email{fujino@math.sci.osaka-u.ac.jp}
\DeclareMathOperator{\Nqklt}{Nqklt}
\DeclareMathOperator{\Nklt}{Nklt}
\DeclareMathOperator{\Nlc}{Nlc}
\DeclareMathOperator{\mult}{mult}
\DeclareMathOperator{\Exc}{Exc}
\newtheorem{thm}{Theorem}[section]
\newtheorem{lem}[thm]{Lemma}
\newtheorem{prop}[thm]{Proposition}
\newtheorem{cor}[thm]{Corollary}
\theoremstyle{definition}
\newtheorem{step}{Step}
\newtheorem{case}{Case}
\newtheorem{ex}[thm]{Example}
\newtheorem{defn}[thm]{Definition}
\newtheorem{rem}[thm]{Remark}
\newtheorem*{ack}{Acknowledgments}  
\newtheorem*{conventions}{Conventions}       
\newcommand{\Supp}[0]{\operatorname{Supp}}
\newcommand{\Spec}[0]{\operatorname{Spec}}
\begin{document}

\begin{abstract}
We establish a kind of subadjunction formula for quasi-log canonical 
pairs. As an application, we prove that a connected projective 
quasi-log canonical pair whose quasi-log canonical class 
is anti-ample is simply connected and rationally chain connected. 
We also supplement the cone theorem for quasi-log canonical 
pairs. 
More precisely, we prove that every negative 
extremal ray is spanned by a rational curve. 
Finally, we 
treat the notion of Mori hyperbolicity for quasi-log canonical pairs. 
\end{abstract}

\maketitle 

\tableofcontents

\section{Introduction}\label{a-sec1}

Let $(X, \Delta)$ be a projective log canonical pair and 
let $W$ be a minimal log canonical center of $(X, \Delta)$. 
Then we can find an effective $\mathbb R$-divisor 
$\Delta_W$ on $W$ such that 
$$
(K_X+\Delta)|_W\sim _{\mathbb R} K_W+\Delta_W 
$$ and 
that $(W, \Delta_W)$ is a kawamata log terminal pair. 
This is a famous subadjunction formula for 
minimal log canonical centers (see 
\cite[Theorem 1]{kawamata2} and 
\cite[Theorem 1.2]{fujino-gongyo}) and has already played 
a very important role in the theory of minimal models 
for higher-dimensional algebraic varieties. 
Hence, 
it is very natural and interesting to consider some useful generalizations. 
In this paper, we prove a kind of subadjunction formula 
for (not necessarily minimal) qlc centers of 
quasi-log canonical pairs. We note that 
the notion of quasi-log canonical pairs 
is a generalization of that of log canonical 
pairs. Then we discuss several powerful applications 
of our new subadjunction formula for quasi-log canonical pairs. 

\medskip 

The main purpose of this paper is to establish the 
following theorem, which we call {\em{subadjunction}} 
for qlc strata. 
Theorem \ref{a-thm1.1} is a generalization of \cite[Corollary 1.10]
{fujino-slc-trivial}, where we treat only minimal qlc centers. 
Our proof heavily depends on the structure theorem for normal 
irreducible 
quasi-log canonical 
pairs established in \cite[Theorem 1.7]{fujino-slc-trivial}. 
We note that it is a consequence of some deep results of 
the theory of variations of mixed Hodge structure discussed in 
\cite{fujino-fujisawa}. Therefore, Theorem \ref{a-thm1.1} is 
highly nontrivial. 

\begin{thm}[Subadjunction for qlc strata]\label{a-thm1.1} 
Let $[X, \omega]$ be a quasi-log canonical pair and 
let $W$ be a qlc stratum of $[X, \omega]$. 
Let $\nu:W^\nu\to W$ be the normalization. 
Assume that $W^\nu$ is quasi-projective 
and $H$ is an ample $\mathbb R$-divisor on $W^\nu$. 
Then there exists a boundary $\mathbb R$-divisor 
$\Delta$ on $W^\nu$ such that $$K_{W^\nu}+\Delta\sim 
_{\mathbb R}\nu^*(\omega|_W)+H$$ and 
that $$\Nklt(W^\nu, \Delta)=\nu^{-1}\Nqklt(W, \omega|_W), 
$$ 
where $\Nklt(W^\nu, \Delta)$ denotes 
the non-klt locus of $(W^\nu, \Delta)$.  
More precisely, the equality $$\nu_*\mathcal J(W^\nu, \Delta)=
\mathcal I_{\Nqklt(W, \omega|_W)}$$ holds, where 
$\mathcal J(W^\nu, \Delta)$ is the multiplier ideal 
sheaf of $(W^\nu, \Delta)$ and 
$\mathcal I_{\Nqklt(W, \omega|_W)}$ is the defining ideal 
sheaf of $\Nqklt(W, \omega|_W)$ on $W$. 
Furthermore, if $[X, \omega]$ has a $\mathbb Q$-structure 
and $H$ is an ample $\mathbb Q$-divisor on $W^\nu$, 
then we can make $\Delta$ a $\mathbb Q$-divisor on $W^\nu$ such that  
$$K_{W^\nu}+\Delta\sim _{\mathbb Q} \nu^*(\omega|_W)+H$$ holds. 
\end{thm}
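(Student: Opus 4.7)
The plan is to reduce to the case of a normal irreducible quasi-log canonical pair and then invoke the structure theorem \cite[Theorem 1.7]{fujino-slc-trivial}, which is the deep input via variations of mixed Hodge structure alluded to in the discussion preceding the statement. As a first step, I would apply adjunction for quasi-log canonical pairs to equip $W$ with its induced qlc structure $[W,\omega|_W]$; under this structure the qlc strata of $[W,\omega|_W]$ are precisely the qlc strata of $[X,\omega]$ contained in $W$, so $\Nqklt(W,\omega|_W)$ coincides scheme-theoretically with the union of all proper qlc strata of $[X,\omega]$ contained in $W$. Since a qlc stratum is irreducible by definition, $W^\nu$ is a normal irreducible quasi-projective variety, and $[W^\nu,\nu^*(\omega|_W)]$ inherits a qlc structure whose non-qklt locus is exactly $\nu^{-1}\Nqklt(W,\omega|_W)$.

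Once this is in place, I would apply \cite[Theorem 1.7]{fujino-slc-trivial} directly to the normal irreducible qlc pair $[W^\nu,\nu^*(\omega|_W)]$ together with the ample $\mathbb R$-divisor $H$. This should produce an effective boundary $\mathbb R$-divisor $\Delta$ on $W^\nu$ with
$$
K_{W^\nu}+\Delta\sim_{\mathbb R}\nu^*(\omega|_W)+H
$$
together with the multiplier ideal identity $\mathcal J(W^\nu,\Delta)=\mathcal I_{\Nqklt(W^\nu,\nu^*(\omega|_W))}$; in particular $\Nklt(W^\nu,\Delta)$ equals the non-qklt locus on $W^\nu$. Combining this with the set-theoretic identification from the previous step immediately gives $\Nklt(W^\nu,\Delta)=\nu^{-1}\Nqklt(W,\omega|_W)$, and pushing the multiplier ideal identity forward via the finite map $\nu$ yields the desired $\nu_*\mathcal J(W^\nu,\Delta)=\mathcal I_{\Nqklt(W,\omega|_W)}$, once one checks that $\nu_*\mathcal I_{\Nqklt(W^\nu,\nu^*(\omega|_W))}=\mathcal I_{\Nqklt(W,\omega|_W)}$, which is the compatibility of the qlc structures under normalization.

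The $\mathbb Q$-coefficient refinement follows verbatim, since \cite[Theorem 1.7]{fujino-slc-trivial} admits a parallel $\mathbb Q$-version. The principal obstacle is not the linear equivalence nor the construction of $\Delta$ itself, but rather the compatibility of quasi-log structures with the finite map $\nu\colon W^\nu\to W$: namely, verifying that qlc strata of $[W,\omega|_W]$ pull back correctly and that the defining ideal sheaf of the non-qklt locus is preserved by $\nu_*$. This requires careful bookkeeping within the theory of quasi-log structures and rests on the non-trivial fact that adjunction for qlc strata followed by normalization produces a genuine normal irreducible qlc pair on $W^\nu$ to which \cite[Theorem 1.7]{fujino-slc-trivial} can be applied in the first place.
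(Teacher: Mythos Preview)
Your overall strategy matches the paper's: use adjunction to equip $[W,\omega|_W]$ with its induced qlc structure, pass to the normalization via \cite[Theorem 1.1]{fujino-haidong} (which supplies both the qlc structure on $W^\nu$ and the ideal-sheaf compatibility $\nu_*\mathcal I_{\Nqklt(W^\nu,\nu^*(\omega|_W))}=\mathcal I_{\Nqklt(W,\omega|_W)}$ that you flag as the ``principal obstacle''), and then invoke the structure theorem for normal irreducible qlc pairs.

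The gap is in how you describe the role of \cite[Theorem 1.7]{fujino-slc-trivial}. That theorem does \emph{not} take an ample divisor $H$ as input and output a boundary $\Delta$ on $W^\nu$ with the multiplier ideal identity. What it actually provides (see Theorem~\ref{a-thm3.2}) is a projective birational morphism $p\colon W'\to W^\nu$ from a smooth quasi-projective variety together with a decomposition
\[
K_{W'}+B_{W'}+M_{W'}=p^*\nu^*(\omega|_W),
\]
where $B_{W'}$ is only a \emph{sub}boundary with simple normal crossing support, $B_{W'}^{<0}$ is $p$-exceptional, $M_{W'}$ is merely potentially nef, and $p(B_{W'}^{=1})=\Nqklt(W^\nu,\nu^*(\omega|_W))$. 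To get from this to an honest boundary $\Delta$ on $W^\nu$, the paper carries out an explicit construction: choose an effective $p$-exceptional divisor $F$ with $-F$ $p$-ample (this is where quasi-projectivity of $W^\nu$ enters), observe that $p^*H-\varepsilon F+M_{W'}$ is semi-ample for $0<\varepsilon\ll 1$ (this is the only place $H$ is used, and its ampleness is essential to dominate the potentially nef part $M_{W'}$), replace it by a general effective member $G$, and set $\Delta:=p_*(B_{W'}+\varepsilon F+G)$. One then checks directly that $\mathcal J(W^\nu,\Delta)=p_*\mathcal O_{W'}(-B_{W'}^{=1})=\mathcal I_{\Nqklt(W^\nu,\nu^*(\omega|_W))}$, using that $\lfloor(B_{W'}+\varepsilon F+G)^{\geq 1}\rfloor=B_{W'}^{=1}$ by the generality of $G$ and the smallness of $\varepsilon$.

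So your outline has the right architecture but collapses the substantive step---turning a generalized-pair decomposition upstairs into a boundary with prescribed multiplier ideal downstairs---into a black-box citation that does not in fact deliver it.
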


We give two remarks 
in order to help the reader understand Theorem \ref{a-thm1.1}. 

\begin{rem}\label{a-rem1.2}
In Theorem \ref{a-thm1.1}, $[W, \omega|_W]$ naturally becomes 
a quasi-log 
canonical pair by adjunction (see \cite[Theorem 6.3.5 (i)]
{fujino-foundations}) and 
$\Nqklt(W, \omega|_W)$ denotes the union of all qlc 
centers of $[W, \omega|_W]$. 
By adjunction again (see \cite[Theorem 6.3.5 (i)]{fujino-foundations}), 
$$[\Nqklt(W, \omega|_W), \omega|_{\Nqklt(W, \omega|_W)}]$$ 
becomes a quasi-log canonical pair. 
\end{rem}

\begin{rem}\label{a-rm1.3}
By \cite[Theorem 1.1]{fujino-haidong}, we know that 
$[W^\nu, \nu^*(\omega|_W)]$ naturally 
has a quasi-log canonical 
structure. 
In the proof of Theorem \ref{a-thm1.1}, we see that the 
equality $$\mathcal J(W^\nu, \Delta)=\mathcal I_{\Nqklt(W^\nu, 
\nu^*(\omega|_W))}$$ holds, 
where $\mathcal I_{\Nqklt(W^\nu, 
\nu^*(\omega|_W))}$ is the defining ideal sheaf of $\Nqklt(W^\nu, 
\nu^*(\omega|_W))$, the union of all qlc centers of $[W^\nu, 
\nu^*(\omega|_W)]$, on $W$. 
\end{rem}

By combining Theorem \ref{a-thm1.1} with \cite[Theorem 1.2]{fujino-slc}, 
we can easily obtain: 

\begin{cor}[Subadjunction for slc strata]\label{a-cor1.4}
Let $(X, \Delta)$ be a quasi-projective 
semi-log canonical pair and let $W$ be 
an slc stratum of $(X, \Delta)$. 
Let $\nu: W^\nu\to W$ be the normalization and let 
$H$ be an ample $\mathbb R$-divisor on $W^\nu$. 
Then there exits a boundary $\mathbb R$-divisor 
$\Delta^\dag$ on $W^\nu$ such that 
$$K_{W^\nu}+\Delta^\dag\sim _{\mathbb R} \nu^*\left((K_X+\Delta)|_W\right)
+H$$ and that 
$$\Nklt (W^\nu, \Delta^\dag)=\nu^{-1}E, 
$$ 
where $E$ is the union of all slc centers of $(X, \Delta)$ 
that are strictly contained in $W$ and 
$\Nklt(W^\nu, \Delta^\dag)$ denotes the 
non-klt locus of $(W^\nu, \Delta^\dag)$. 
More precisely, the equality $$\nu_*\mathcal J(W^\nu, \Delta^\dag)=
\mathcal I_{\Nqklt(W, \omega|_W)}$$ holds, where 
$\omega:=K_X+\Delta$, 
$\mathcal J(W^\nu, \Delta^\dag)$ is the multiplier ideal 
sheaf of $(W^\nu, \Delta^\dag)$, and 
$\mathcal I_{\Nqklt(W, \omega|_W)}$ is the defining ideal 
sheaf of $\Nqklt(W, \omega|_W)$ on $W$. 
Note that $[X, \omega]$ naturally becomes 
a quasi-log canonical pair and that $[W, \omega|_W]$ 
has a quasi-log canonical 
structure induced from the natural quasi-log canonical 
structure of $[X, \omega]$ by adjunction. 
Furthermore, if $K_X+\Delta$ is $\mathbb Q$-Cartier and 
$H$ is an ample $\mathbb Q$-divisor 
on $W^\nu$, then we can make $\Delta^\dag$ a 
$\mathbb Q$-divisor on $W^\nu$ such that $$K_{W^\nu}+\Delta^\dag\sim 
_{\mathbb Q} \nu^*\left((K_X+\Delta)|_W\right)+H$$ holds. 
\end{cor}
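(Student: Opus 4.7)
The plan is to reduce Corollary~\ref{a-cor1.4} to Theorem~\ref{a-thm1.1} by first equipping the semi-log canonical pair with its natural quasi-log canonical structure. By \cite[Theorem 1.2]{fujino-slc}, the pair $(X, \Delta)$ gives rise to a quasi-log canonical structure $[X, \omega]$ with $\omega = K_X + \Delta$, in which the qlc strata coincide with the slc strata of $(X, \Delta)$. Under this correspondence, the slc stratum $W$ becomes a qlc stratum of $[X, \omega]$. Since $W^\nu$ is quasi-projective (as the normalization of a closed subscheme of the quasi-projective variety $X$), I would apply Theorem~\ref{a-thm1.1} to $[X, \omega]$, $W$, and the given ample $\mathbb R$-divisor $H$. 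This produces a boundary $\mathbb R$-divisor $\Delta^\dag$ on $W^\nu$ satisfying
\[
K_{W^\nu} + \Delta^\dag \sim_{\mathbb R} \nu^*(\omega|_W) + H = \nu^*\left((K_X+\Delta)|_W\right) + H,
\]
together with the equality $\nu_*\mathcal J(W^\nu, \Delta^\dag) = \mathcal I_{\Nqklt(W, \omega|_W)}$, which is exactly the ideal-sheaf statement required.

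The one remaining step is to identify $\Nqklt(W, \omega|_W)$ with the subset $E\subset W$ in the corollary. By adjunction for qlc pairs (see \cite[Theorem 6.3.5 (i)]{fujino-foundations}), the induced qlc structure $[W, \omega|_W]$ has as its qlc centers exactly the qlc centers of $[X, \omega]$ contained in $W$. Hence $\Nqklt(W, \omega|_W)$ is the union of qlc centers of $[X, \omega]$ strictly contained in $W$, which under the slc/qlc dictionary of \cite[Theorem 1.2]{fujino-slc} coincides with $E$, the union of slc centers strictly contained in $W$. Pulling back by $\nu$ then yields $\Nklt(W^\nu, \Delta^\dag) = \nu^{-1}\Nqklt(W, \omega|_W) = \nu^{-1}E$, as required.

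For the final assertion, if $K_X + \Delta$ is $\mathbb Q$-Cartier then $\omega$ is $\mathbb Q$-Cartier, so $[X, \omega]$ carries a natural $\mathbb Q$-structure; taking $H$ ample and $\mathbb Q$-Cartier, the last clause of Theorem~\ref{a-thm1.1} directly gives a choice of $\Delta^\dag$ which is a $\mathbb Q$-divisor satisfying $K_{W^\nu} + \Delta^\dag \sim_{\mathbb Q} \nu^*((K_X+\Delta)|_W) + H$. The argument is essentially a transfer of the main theorem to the slc setting, so no serious obstacle arises; the only substantive point is to correctly invoke the slc/qlc dictionary of \cite[Theorem 1.2]{fujino-slc} in order to match $\Nqklt(W, \omega|_W)$ with $E$.
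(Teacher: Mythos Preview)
Your proposal is correct and follows essentially the same route as the paper: invoke \cite[Theorem 1.2]{fujino-slc} (Theorem~\ref{p-thm2.9}) to pass from the slc pair to a qlc pair, use adjunction to identify $\Nqklt(W,\omega|_W)$ with $E$, and then apply Theorem~\ref{a-thm1.1} directly. The only minor point to tighten is the $\mathbb Q$-case: the implication ``$\omega$ is $\mathbb Q$-Cartier, so $[X,\omega]$ carries a $\mathbb Q$-structure'' is not formal from Definition~\ref{p-def2.6} alone---it relies on the fact that the construction in \cite{fujino-slc} actually produces $\Delta_Z$ a $\mathbb Q$-divisor with $f^*\omega\sim_{\mathbb Q}K_Z+\Delta_Z$ when $K_X+\Delta$ is $\mathbb Q$-Cartier---but the paper's own proof is equally brief on this point.
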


Corollary \ref{a-cor1.4} is a very powerful 
generalization of \cite[Theorem 1]{kawamata2}. 
We give a small remark on Corollary \ref{a-cor1.4} for the reader's 
convenience. 

\begin{rem}[see Remark \ref{a-rem4.1}]\label{a-rem1.5}
If $(X, \Delta)$ is log canonical, equivalently, 
$X$ is normal, 
in Corollary \ref{a-cor1.4}, 
then it is sufficient to assume that $W^\nu$ is quasi-projective. 
We do not need to assume that $X$ is quasi-projective 
when $X$ is normal in Corollary \ref{a-cor1.4}. 
\end{rem}

As an application of Theorem \ref{a-thm1.1}, 
we can prove: 

\begin{thm}[Simply connectedness of qlc Fano pairs]\label{a-thm1.6}
Let $[X, \omega]$ be a projective 
quasi-log canonical pair such that $-\omega$ is ample and that 
$X$ is 
connected. Then $X$ is simply connected, that is, 
the topological fundamental group of $X$ is trivial. 
\end{thm}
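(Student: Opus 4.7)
My plan is to apply Theorem \ref{a-thm1.1} to a minimal qlc stratum of $[X,\omega]$ in order to reduce the simple connectedness question to the known case of a projective klt log Fano pair, and then to lift the conclusion from the stratum back up to $X$ by an induction along the qlc stratification.

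First I would pick a minimal qlc stratum $W$ of $[X,\omega]$. Minimal qlc strata are normal, so $W^\nu=W$, and $[W,\omega|_W]$ has no strictly smaller qlc centers, i.e.\ $\Nqklt(W,\omega|_W)=\emptyset$. Since $-\omega|_W$ is ample, I choose an ample $\mathbb R$-divisor $H$ on $W$ small enough that $-(\omega|_W+H)$ remains ample. Theorem \ref{a-thm1.1} then produces a boundary $\Delta$ on $W$ with
$$K_W+\Delta\sim_{\mathbb R}\omega|_W+H,\qquad \Nklt(W,\Delta)=\emptyset,$$
so $(W,\Delta)$ is a projective klt log Fano pair. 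By the known simple connectedness of such pairs (Takayama, and its subsequent log-klt generalizations) we obtain $\pi_1(W)=1$.

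Next I would induct on $\dim X$. By Remark \ref{a-rem1.2}, $[\Nqklt(X,\omega),\omega|_{\Nqklt(X,\omega)}]$ is itself a qlc pair with anti-ample qlc class, and Fujino's Kodaira-type vanishing $H^1(X,\mathcal I_{\Nqklt(X,\omega)})=0$ applied to the exact sequence
$$0\longrightarrow\mathcal I_{\Nqklt(X,\omega)}\longrightarrow\mathcal O_X\longrightarrow\mathcal O_{\Nqklt(X,\omega)}\longrightarrow 0$$
shows that $\Nqklt(X,\omega)$ is connected. By the inductive hypothesis $\pi_1(\Nqklt(X,\omega))=1$. Since $W\subseteq \Nqklt(X,\omega)$, it suffices to show that the inclusion $\Nqklt(X,\omega)\hookrightarrow X$ induces a surjection on $\pi_1$.

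This last surjection is the main obstacle. I would establish it by a Lefschetz-type argument on a quasi-log resolution $f:Y\to X$, combining Fujino's vanishing theorems with the mixed Hodge-theoretic structure on qlc pairs supplied by \cite{fujino-fujisawa} and \cite{fujino-slc-trivial}: the anti-ampleness of $\omega$ together with Hodge theory controls $H^1$ of the complement $X\setminus\Nqklt(X,\omega)$, and a Nori- or Kollár-style topological upgrade, invoking rational chain connectedness (itself a consequence of the same circle of ideas), promotes the abelian vanishing to the desired surjection on fundamental groups.
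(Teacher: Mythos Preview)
Your opening move---applying Theorem \ref{a-thm1.1} to the minimal stratum to produce a klt log Fano structure and invoke Takayama---is correct and matches how the paper begins (Lemma \ref{a-lem5.1} together with the first paragraph of the sketch proof in Section \ref{a-sec5}).

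The genuine gap is the inductive step. You yourself flag the surjectivity of $\pi_1(\Nqklt(X,\omega))\to\pi_1(X)$ as ``the main obstacle,'' and what follows is not a proof but a list of keywords. A Lefschetz-type statement has no evident formulation here, since $\Nqklt(X,\omega)$ is not an ample divisor and may have large codimension; controlling $H^1$ of the \emph{complement} $X\setminus\Nqklt(X,\omega)$, even granting it, speaks to $\pi_1$ of the complement rather than to the inclusion you need; and neither rational chain connectedness of $X$ nor a Nori--Koll\'ar covering lemma delivers, by itself, a $\pi_1$-surjection from the boundary locus. The step is asserted, not established.

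The paper proceeds differently in a way that matters. It applies subadjunction not just to the minimal stratum but to \emph{every} qlc stratum $W$: Lemma \ref{a-lem5.1} equips each normalization $W^\nu$ with a log Fano pair $(W^\nu,\Delta)$ whose non-klt locus is exactly $\nu^{-1}E$ (the union of strictly smaller strata) and is moreover connected. After the perturbation to $\mathbb Q$-coefficients in Lemma \ref{a-lem5.2}, every stratum sits in the framework of \cite{fujino-wenfei}, and the paper then runs the argument of \cite[Theorem 2.7]{fujino-wenfei} essentially verbatim. That argument does climb the stratification inductively, but each step uses the log Fano structure on $W^\nu$ together with the connectedness of its non-klt locus---information your scheme discards by treating only the minimal stratum and then attempting a single topological jump from $\Nqklt(X,\omega)$ to $X$.
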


Theorem \ref{a-thm1.6}, which is a generalization of 
\cite[Theorem 0.2]{fujino-wenfei}, completely 
confirms a conjecture raised by the author 
(see \cite[Conjecture 1.3]{fujino-pull} and Remark 
\ref{p-rem2.10}). We can also prove: 

\begin{thm}[Rationally chain connectedness of qlc Fano pairs]\label{a-thm1.7} 
Let $[X, \omega]$ be a projective quasi-log canonical 
pair such that $-\omega$ is ample and that 
$X$ is connected. 
Then $X$ is rationally chain connected. 
This means that for arbitrary closed points $x_1, x_2\in X$ there exists 
a connected curve $C\subset X$ which contains 
$x_1$ and $x_2$ such that 
every irreducible component of $C$ is rational.
\end{thm}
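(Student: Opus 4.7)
The plan is to induct on $\dim X$ and use subadjunction (Theorem~\ref{a-thm1.1}) to transfer the qlc Fano structure on $X$ to a klt (or log canonical) Fano structure on the normalization of a qlc stratum. Rational connectedness of klt Fano pairs (Zhang, Hacon--McKernan) together with the known rational chain connectedness for log canonical Fano pairs will then supply the rational curves, which are glued together by induction on the dimension of $\Nqklt(X,\omega)$. The base case $\dim X = 0$ is trivial.

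\textbf{Step 1 (minimal qlc strata are rationally chain connected).} Let $W$ be a minimal qlc stratum of $[X, \omega]$; by minimality, $\Nqklt(W, \omega|_W) = \emptyset$. Fix an ample $\mathbb R$-divisor $A$ on $W^\nu$ and apply Theorem~\ref{a-thm1.1} with $H := \varepsilon A$ for small $\varepsilon > 0$, obtaining a klt pair $(W^\nu, \Delta)$ with $K_{W^\nu} + \Delta \sim_{\mathbb R} \nu^*(\omega|_W) + \varepsilon A$. Since $-\nu^*(\omega|_W)$ is ample (restriction of ample) and the ample cone is open, $-(K_{W^\nu}+\Delta)$ is ample for $\varepsilon$ small, so $(W^\nu, \Delta)$ is a klt Fano pair. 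By the theorem of Zhang and Hacon--McKernan, $W^\nu$ is rationally connected, hence $W = \nu(W^\nu)$ is rationally chain connected.

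\textbf{Step 2 (induction on $N := \Nqklt(X, \omega)$).} If $N = \emptyset$, then $X$ must be irreducible (otherwise intersections of distinct components would be proper qlc centers contributing to $N$), and $X$ is itself a minimal qlc stratum, hence rationally chain connected by Step~1. If $N \neq \emptyset$, then by the connectedness lemma for qlc pairs (a consequence of the vanishing theorems for qlc pairs applied to the ample $-\omega$), $N$ is connected. Since $[N, \omega|_N]$ is a projective qlc pair (Remark~\ref{a-rem1.2}) with $-\omega|_N$ ample and $\dim N < \dim X$, the induction hypothesis shows $N$ is rationally chain connected.

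\textbf{Step 3 (joining arbitrary points to $N$).} For each irreducible component $X_i$ of $X$ with normalization $\nu_i\colon \tilde X_i \to X_i$, apply Theorem~\ref{a-thm1.1} with $W = X_i$ and $H$ a small ample $\mathbb R$-divisor on $\tilde X_i$: we obtain a projective log canonical Fano pair $(\tilde X_i, \Delta_i)$ with $\Nklt(\tilde X_i, \Delta_i) = \nu_i^{-1}(N \cap X_i)$. By the known rational chain connectedness for log canonical Fano pairs (Hacon--McKernan, Fujino), every point of $\tilde X_i$ is chain-connected to $\Nklt(\tilde X_i, \Delta_i)$, so every point of $X_i$ is chain-connected to $N \cap X_i \subset N$. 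Combined with Step~2, $X$ is rationally chain connected. The main obstacle is the connectedness statement for $N$ in Step~2, which amounts to extending the classical Koll\'ar--Shokurov connectedness lemma from log canonical to quasi-log canonical pairs; this relies on the vanishing and adjunction framework for qlc pairs developed in Fujino's foundations, and on the fact that Theorem~\ref{a-thm1.1} produces a pair whose non-klt locus is exactly the preimage of the non-qklt locus on $X$.
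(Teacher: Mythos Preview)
Your approach is essentially the paper's: the paper packages your Steps~1 and~3 as Lemma~\ref{a-lem5.1} (subadjunction via Theorem~\ref{a-thm1.1} produces a normal Fano pair on $W^\nu$ with $\Nklt(W^\nu,\Delta)=\nu^{-1}E$), notes the connectedness of $\Nklt(W^\nu,\Delta)$ exactly as you do in Step~2 (this is Proposition~\ref{p-prop2.11}(iv)), and then defers to the inductive gluing argument of \cite[Corollary~2.5]{fujino-wenfei}, which is what you spell out explicitly.

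One point to tighten. In Step~3 you call $(\tilde X_i,\Delta_i)$ a \emph{log canonical} Fano pair and then invoke rational chain connectedness for lc Fano pairs. Theorem~\ref{a-thm1.1} does not assert log canonicity: it only says $\Delta_i$ is a boundary and identifies $\Nklt(\tilde X_i,\Delta_i)$ with $\nu_i^{-1}(N\cap X_i)$ via the multiplier ideal. Indeed, in the proof of Theorem~\ref{a-thm1.1} the model divisor $B_{W'}+\varepsilon F+G$ can have coefficients strictly greater than $1$ along exceptional components of $B_{W'}^{=1}$, so the pair need not be lc. This does not break your strategy, because what the induction actually needs is that every point of $\tilde X_i$ is rationally chain connected to $\Nklt(\tilde X_i,\Delta_i)$, and this follows from the Hacon--M\textsuperscript{c}Kernan type argument after a dlt blow-up as in Lemma~\ref{v-lem7.2}, using only ampleness of $-(K_{\tilde X_i}+\Delta_i)$ and the identification of the non-klt locus. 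The paper sidesteps the issue by invoking the \emph{proof} of \cite[Corollary~2.5]{fujino-wenfei} rather than the lc statement; you should do the same, or cite the relevant result at the level of pairs with prescribed $\Nklt$ rather than lc pairs.
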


Of course, Theorem \ref{a-thm1.7} is a generalization 
of \cite[Corollary 2.5]{fujino-wenfei} by \cite[Theorem 1.2]{fujino-slc} 
(see also Theorem \ref{p-thm2.9}) 
and adjunction for quasi-log canonical pairs (see \cite[Theorem 6.3.5 (i)]
{fujino-foundations}). 

\medskip

From now on, we discuss the cone theorem for 
quasi-log canonical pairs. 
Let $[X, \omega]$ be a quasi-log canonical pair and 
let $\pi:X\to S$ be a projective 
morphism between schemes. 
Then it is well known that the cone theorem 
$$
\overline{NE}(X/S)=\overline{NE}(X/S)_{\omega\geq 0}+\sum R_j
$$
holds, where $R_j$'s are the $\omega$-negative extremal rays 
of the relative Kleiman--Mori cone 
$\overline{NE}(X/S)$. 
For the details, see \cite[Theorem 6.7.4]{fujino-foundations}. 
As an application of Theorem \ref{a-thm1.1}, we obtain: 

\begin{thm}[Lengths of extremal rational curves]\label{a-thm1.8}
Each $\omega$-negative 
extremal ray $R_j$ is spanned by an integral {\em{(}}possibly singular{\em{)}} 
rational 
curve $C_j$ on $X$ such that 
$\pi(C_j)$ is a point and that 
$0<-\omega\cdot C_j \leq 2\dim X$. 
\end{thm}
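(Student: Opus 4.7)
The strategy is to combine the contraction theorem for qlc pairs (\cite[Theorem~6.7.4]{fujino-foundations}) with the subadjunction formula of Theorem~\ref{a-thm1.1} in order to reduce the problem to the classical length-of-extremal-rays bound for klt projective pairs due to Kawamata, Mori, and Shokurov.

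Let $\varphi_R\colon X\to Y$ denote the contraction of $R=R_j$ over $S$. A curve $C\subset X$ with $\pi(C)$ a point and $[C]\in R$ is precisely one contracted by $\varphi_R$. I would choose a qlc stratum $W$ of $[X,\omega]$ of minimal dimension subject to being contained in a fiber of $\varphi_R$. Using adjunction for qlc pairs (\cite[Theorem~6.3.5]{fujino-foundations}, Remark~\ref{a-rem1.2}), qlc strata of $[W',\omega|_{W'}]$ coincide with qlc strata of $[X,\omega]$ contained in $W'$; hence descent along $\Nqklt(\cdot,\omega|_\cdot)$ starting from any qlc stratum in a fiber of $\varphi_R$ yields a minimal such $W$, and minimality forces $\Nqklt(W,\omega|_W)=\emptyset$. (If no proper qlc stratum lies in a fiber of $\varphi_R$, then $[X,\omega]$ has essentially klt behavior transverse to the contraction and one takes $W=X$, running the subsequent argument relatively.)

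Let $\nu\colon W^\nu\to W$ be the normalization; since $W$ lies in a single fiber of $\pi$, the scheme $W^\nu$ is projective. Fix an ample $\mathbb Q$-divisor $A$ on $W^\nu$ and a parameter $\varepsilon>0$, and invoke Theorem~\ref{a-thm1.1} with $H=\varepsilon A$: this produces a boundary $\Delta_\varepsilon$ on $W^\nu$ with
\[
K_{W^\nu}+\Delta_\varepsilon\sim_{\mathbb R}\nu^*(\omega|_W)+\varepsilon A,
\]
and the vanishing $\Nqklt(W,\omega|_W)=\emptyset$ guarantees that $(W^\nu,\Delta_\varepsilon)$ is kawamata log terminal. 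Any irreducible curve $C'\subset W^\nu$ not contracted by $\nu$ satisfies $\omega\cdot\nu_*C'<0$ (since $\nu_*C'\in R$), so for $\varepsilon$ sufficiently small $(K_{W^\nu}+\Delta_\varepsilon)\cdot C'<0$; hence $K_{W^\nu}+\Delta_\varepsilon$ is not nef and $\overline{NE}(W^\nu)$ admits a $(K_{W^\nu}+\Delta_\varepsilon)$-negative extremal ray $R_\varepsilon$ whose image in $\overline{NE}(X/S)$ under $\nu_*$ lies in $R$.

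Applying the classical length-of-extremal-rays bound to the klt projective pair $(W^\nu,\Delta_\varepsilon)$ yields a rational curve $C'_\varepsilon\subset W^\nu$ spanning $R_\varepsilon$ with $0<-(K_{W^\nu}+\Delta_\varepsilon)\cdot C'_\varepsilon\le 2\dim W^\nu\le 2\dim X$. Setting $C_\varepsilon:=\nu(C'_\varepsilon)\subset X$ produces an integral rational curve with $\pi(C_\varepsilon)$ a point, $[C_\varepsilon]\in R_j$, and
\[
0<-\omega\cdot C_\varepsilon=-(K_{W^\nu}+\Delta_\varepsilon)\cdot C'_\varepsilon+\varepsilon(A\cdot C'_\varepsilon)\le 2\dim X+\varepsilon(A\cdot C'_\varepsilon).
\]
The principal obstacle will be upgrading this $\varepsilon$-approximate bound to the sharp $2\dim X$; I expect this to follow from a standard boundedness argument, since as $\varepsilon\to 0$ the curves $C_\varepsilon$ have uniformly bounded $-\omega$-degree and hence lie in a bounded family on $X$, from which one extracts a limiting rational curve $C$ with $[C]\in R_j$ satisfying $0<-\omega\cdot C\le 2\dim X$.
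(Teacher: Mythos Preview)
Your overall architecture---contract, pass to a suitable qlc stratum, normalize, apply subadjunction (Theorem~\ref{a-thm1.1}) to get an honest pair, then invoke a classical length bound and take a limit as $\varepsilon\to 0$---is exactly the paper's. The limiting/boundedness step is also essentially the same (the paper bounds $A\cdot C_i$ via an auxiliary ample $A$ with $-(\omega+A)$ still relatively ample).

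The genuine gap is in the choice of $W$ and the resulting claim that $(W^\nu,\Delta_\varepsilon)$ is klt. You look for a qlc stratum of minimal dimension \emph{contained in a fiber} of $\varphi_R$, so that $\Nqklt(W,\omega|_W)=\emptyset$ and subadjunction yields a klt pair. But such a positive-dimensional stratum need not exist: for instance, if every qlc center of $[X,\omega]$ is finite over $Y$ while $X\to Y$ is not (say $X$ a surface fibered over a curve with a unique lc center which is a section), then no proper qlc stratum sits in a fiber. Your fallback ``take $W=X$ and run relatively'' does not rescue the argument, because Theorem~\ref{a-thm1.1} then produces $(X^\nu,\Delta_\varepsilon)$ with $\Nklt(X^\nu,\Delta_\varepsilon)=\nu^{-1}\Nqklt(X,\omega)\neq\emptyset$; this pair is not klt, and in fact need not even be log canonical (Theorem~\ref{a-thm1.1} controls the multiplier ideal, not the non-lc ideal---in the proof the coefficients of $B_{W'}+\varepsilon F$ can exceed $1$ along exceptional components of $B_{W'}^{=1}$). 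So neither the klt nor the lc length bound applies directly.

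The paper resolves this by choosing $W$ so that $\Nqklt(W,\omega|_W)\to\pi(\Nqklt(W,\omega|_W))$ is \emph{finite} (not necessarily empty) while $W\to\pi(W)$ is not; such $W$ always exists by descending along qlc strata. After normalizing and applying Theorem~\ref{a-thm1.1}, one has $\Nlc(W^\nu,\Delta_\varepsilon)\subset\Nklt(W^\nu,\Delta_\varepsilon)=\Nqklt(W^\nu,\nu^*\omega)$ finite over the base, and then invokes \cite[Theorem~4.6.7]{fujino-foundations}, a cone theorem valid for pairs whose non-lc locus contains no contracted curve. That stronger cone theorem (itself resting on a dlt blow-up and Kawamata's bound) is the ingredient your reduction to the klt case is missing.
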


Theorem \ref{a-thm1.8} is a generalization of \cite[Theorem 1]{kawamata1}. 
Note that Theorem \ref{a-thm1.8} depends 
on \cite[Theorem 1]{kawamata1}. More generally, we have: 

\begin{thm}[{Mori hyperbolicity, see \cite[Theorems 1.2 and 
6.5]{svaldi} and 
Theorem \ref{v-thm7.7}}]
\label{a-thm1.9} 
In Theorem \ref{a-thm1.8}, 
the curve $C_j$ can be so taken that there exist 
a qlc stratum $W$ of $[X, \omega]$ and 
a non-constant morphism 
$
f:\mathbb A^1\to W\setminus \Nqklt(W, \omega|_W)
$ 
such that $C_j\cap (W\setminus \Nqklt(W, \omega|_W))$ contains 
$f(\mathbb A^1)$. 
\end{thm}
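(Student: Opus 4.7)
The plan is to reduce the claim, via the subadjunction formula of Theorem \ref{a-thm1.1}, to Svaldi's Mori hyperbolicity for log canonical pairs \cite[Theorems 1.2 and 6.5]{svaldi}.

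First, by running the proof of Theorem \ref{a-thm1.8}, one singles out a qlc stratum $W$ of $[X,\omega]$ through which the $\omega$-negative extremal ray $R_j$ is realized. Applying Theorem \ref{a-thm1.1} on $W$ with a sufficiently small ample $\mathbb{R}$-divisor $H$ on the normalization $\nu\colon W^\nu\to W$ produces a boundary $\mathbb{R}$-divisor $\Delta$ satisfying
\[
K_{W^\nu}+\Delta\sim_{\mathbb R}\nu^*(\omega|_W)+H, \qquad \Nklt(W^\nu,\Delta)=\nu^{-1}\Nqklt(W,\omega|_W).
\]
Thus $(W^\nu,\Delta)$ is log canonical on $W^\nu$, kawamata log terminal on $W^\nu\setminus\nu^{-1}\Nqklt(W,\omega|_W)$, and $R_j$ lifts to a $(K_{W^\nu}+\Delta)$-negative extremal ray on $W^\nu$.

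Next, Svaldi's Mori hyperbolicity applied to the log canonical quasi-projective pair $(W^\nu,\Delta)$ and this lifted ray yields a rational curve $\tilde C\subset W^\nu$ spanning it with $-(K_{W^\nu}+\Delta)\cdot\tilde C\leq 2\dim W^\nu\leq 2\dim X$, chosen so that the preimage of $\Nklt(W^\nu,\Delta)$ under the normalization $\mathbb{P}^1\to\tilde C$ consists of at most one point. Restricting the normalization to the complement of this (possibly empty) preimage yields a non-constant morphism $\tilde f\colon\mathbb{A}^1\to W^\nu\setminus\Nklt(W^\nu,\Delta)$ with image contained in $\tilde C$. Setting $C_j:=\nu(\tilde C)$ and $f:=\nu\circ\tilde f$ gives the desired rational curve on $X$ together with the required morphism to $W\setminus\Nqklt(W,\omega|_W)$; the length bound $-\omega\cdot C_j\leq 2\dim X$ follows by taking $H$ to zero in the usual limiting sense for real divisors, noting that $-\omega\cdot C_j=-(K_{W^\nu}+\Delta)\cdot\tilde C + H\cdot\tilde C$.

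The main obstacle is the reduction step: one must verify that $(W^\nu,\Delta)$ has precisely the shape required as input for Svaldi's theorem (log canonical, quasi-projective) and that the qlc extremal ray $R_j$ transports faithfully to a $(K_{W^\nu}+\Delta)$-negative extremal ray on $(W^\nu,\Delta)$ under the subadjunction. Both are essentially automatic consequences of Theorem \ref{a-thm1.1} and the structure of qlc strata under adjunction, after which Svaldi's Mori hyperbolicity is invoked as a black box; a minor technical point is managing the vanishing of $H$ to recover the sharp universal length bound on $X$.
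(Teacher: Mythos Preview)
Your overall strategy matches the paper's, but there is a genuine gap in the reduction step. You assert that the pair $(W^\nu,\Delta)$ produced by Theorem~\ref{a-thm1.1} is log canonical, and then invoke Svaldi's Theorems~1.2 and~6.5 as a black box. Theorem~\ref{a-thm1.1} does not say this. It only guarantees that $\Delta$ is a boundary $\mathbb R$-divisor and that $\Nklt(W^\nu,\Delta)=\nu^{-1}\Nqklt(W,\omega|_W)$; the non-lc locus $\Nlc(W^\nu,\Delta)$ is in general nonempty, though contained in $\Nklt(W^\nu,\Delta)$. Concretely, in the construction of $\Delta$ one adds $\varepsilon F$ with $-F$ $p$-ample and $F$ effective $p$-exceptional, so $\Supp F=\Exc(p)$; any $p$-exceptional component of $B^{=1}_{W'}$ then acquires coefficient strictly greater than $1$, and such components exist whenever $\Nqklt(W^\nu,\nu^*(\omega|_W))$ has a component of codimension $\geq 2$. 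This is exactly why the proof of Theorem~\ref{a-thm1.8} notes $\Nlc(X,\Delta_\varepsilon)\subset\Nklt(X,\Delta_\varepsilon)=\Nqklt(X,\omega)$ rather than claiming $\Nlc=\emptyset$. Since Svaldi's Theorems~1.2 and~6.5 are stated for log canonical pairs, they do not apply.

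The fix is precisely what the paper does: instead of Svaldi's cone-theorem statements, one uses the underlying technical input, namely Proposition~\ref{v-prop7.1} and Corollary~\ref{v-cor7.3} (Svaldi's Proposition~5.2 and Corollary~5.3), which only require $\Delta$ to be a boundary $\mathbb R$-divisor with $K_X+\Delta$ $\mathbb R$-Cartier, together with the dlt modification of Lemma~\ref{v-lem7.2}. Since $(K_{W^\nu}+\Delta)|_{\Nklt(W^\nu,\Delta)}$ is nef over $S$ by the finiteness of $\Nqklt(W,\omega|_W)$ over its image, Corollary~\ref{v-cor7.3} produces the desired non-constant $f_\varepsilon:\mathbb A^1\to W^\nu\setminus\Nklt(W^\nu,\Delta)$ with the length bound, and the limiting argument (via boundedness of the family of curves, as in the proof of Theorem~\ref{a-thm1.8}) gives the final curve $C_j$. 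Your sketch of the limiting step is also too loose: you should note that the curves $C_\varepsilon$ lie in a bounded family (using an auxiliary ample divisor $A$ with $-(\omega+A)$ relatively ample) so that, after passing to a subsequence, $C_\varepsilon$ is a fixed curve $C$, whence the $\mathbb A^1$-property and the bound $-\omega\cdot C\leq 2\dim X$ both survive.
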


We note that 
Theorem \ref{v-thm7.7} is obviously a 
generalization of \cite[Theorems 1.2 and 6.5]{svaldi}. 
By the proof of Theorem \ref{a-thm1.9}, 
we obtain: 

\begin{thm}[{Cone theorem for semi-log canonical pairs, 
see \cite[Theorem 1.19]{fujino-slc}}]\label{a-thm1.10} 
Let $(X, \Delta)$ be a semi-log canonical pair and 
let $\pi:X\to S$ be a projective morphism onto a scheme $S$. 
Then, for each $(K_X+\Delta)$-negative 
extremal ray $R$, 
we can find an slc stratum $W$ of $(X, \Delta)$, 
a non-constant morphism $f:\mathbb A^1\to X$, and 
a possibly singular rational curve $C$ whose numerical 
equivalence class spans $R$ such that 
$f(\mathbb A^1)\subset 
C \cap (W\setminus E)$ holds with 
$0<-(K_X+\Delta)\cdot C\leq 2\dim X$, 
where $E$ is the union of all slc centers of $(X, \Delta)$ that are 
strictly contained in $W$. 
\end{thm}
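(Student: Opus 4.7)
The plan is to deduce Theorem \ref{a-thm1.10} directly from Theorem \ref{a-thm1.9} by translating from the semi-log canonical setting to the quasi-log canonical setting. First, I would invoke \cite[Theorem 1.2]{fujino-slc} to equip $(X, \Delta)$ with its natural quasi-log canonical structure $[X, \omega]$, where $\omega:=K_X+\Delta$. Under this identification, the slc strata of $(X, \Delta)$ coincide with the qlc strata of $[X, \omega]$, and the underlying relative cone $\overline{NE}(X/S)$ is unchanged, so every $(K_X+\Delta)$-negative extremal ray $R$ is an $\omega$-negative extremal ray in the sense of \cite[Theorem 6.7.4]{fujino-foundations}.

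Next, I would apply Theorem \ref{a-thm1.9} to $\pi:X\to S$ and $R$. This produces a qlc stratum $W$ of $[X, \omega]$, a (possibly singular) rational curve $C$ whose numerical class spans $R$ with $0<-\omega\cdot C\leq 2\dim X$, and a non-constant morphism $f:\mathbb A^1\to W\setminus \Nqklt(W, \omega|_W)$ whose image is contained in $C\cap (W\setminus \Nqklt(W, \omega|_W))$. Composing with the inclusion $W\hookrightarrow X$ gives the required morphism $\mathbb A^1\to X$.

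The remaining step is to identify $\Nqklt(W, \omega|_W)$ with $E$, the union of slc centers of $(X, \Delta)$ strictly contained in $W$. By adjunction for quasi-log canonical pairs (\cite[Theorem 6.3.5 (i)]{fujino-foundations}), $[W, \omega|_W]$ inherits a qlc structure whose qlc centers are exactly the qlc centers of $[X, \omega]$ contained in $W$. Combined with the correspondence between slc and qlc strata from \cite[Theorem 1.2]{fujino-slc}, the qlc centers of $[W, \omega|_W]$ strictly contained in $W$ are precisely the slc centers of $(X, \Delta)$ strictly contained in $W$. Taking the union yields $\Nqklt(W, \omega|_W)=E$ as sets, so $W\setminus \Nqklt(W, \omega|_W)=W\setminus E$, and the conclusion follows.

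The bulk of the work is already absorbed into Theorem \ref{a-thm1.9}, so the main obstacle is the bookkeeping in the last step: verifying that the slc/qlc correspondence behaves well under adjunction along $W$, that is, that no ``new'' qlc centers appear on $[W, \omega|_W]$ beyond those arising from slc centers of $(X, \Delta)$. This compatibility is ensured by the explicit construction of the qlc structure in \cite[Theorem 1.2]{fujino-slc} together with the inductive nature of adjunction for quasi-log canonical pairs.
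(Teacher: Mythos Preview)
Your overall strategy matches the paper's: pass to the quasi-log canonical structure via \cite[Theorem 1.2]{fujino-slc}, then invoke the qlc cone theorem with the Mori-hyperbolicity refinement (Theorem \ref{a-thm1.9}, equivalently Theorem \ref{v-thm7.7}), and finally identify $\Nqklt(W,\omega|_W)$ with $E$ using adjunction. The last identification is handled exactly as you outline.

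There is, however, one genuine gap in your first step. Theorem \ref{p-thm2.9} (i.e.\ \cite[Theorem 1.2]{fujino-slc}) only furnishes a quasi-log canonical structure on $[X,K_X+\Delta]$ when $(X,\Delta)$ is \emph{quasi-projective}, and Theorem \ref{a-thm1.10} carries no such hypothesis. Without quasi-projectivity you have no qlc structure, and hence Theorem \ref{a-thm1.9} is not available. The paper handles this by first invoking the cone and contraction theorem for semi-log canonical pairs \cite[Theorem 1.19]{fujino-slc} to take the extremal contraction $\varphi_R:X\to Y$ associated to $R$; after replacing $\pi$ by $\varphi_R$ and shrinking the base, one may assume $-(K_X+\Delta)$ is $\pi$-ample and that both $X$ and $S$ are quasi-projective. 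Only at that point does the paper apply Theorem \ref{p-thm2.9} and then run the proof of Theorem \ref{v-thm7.7}. Once you insert this reduction step, your argument is complete and coincides with the paper's.
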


In \cite{fujino-cone}, we will treat the cone theorem 
for quasi-log schemes which are not necessarily quasi-log 
canonical. 

\medskip 

We summarize the contents of this paper. 
In Section \ref{p-sec2}, we recall some basic definitions. 
In Section \ref{a-sec3}, we review some important 
result in \cite{fujino-slc-trivial}, 
which is the main ingredient of this paper. 
In Section \ref{a-sec4}, 
we prove Theorem \ref{a-thm1.1} and Corollary \ref{a-cor1.4}, 
that is, subadjunction for qlc strata and slc strata, respectively. 
In Section \ref{a-sec5}, we explain how to modify the arguments 
in \cite{fujino-wenfei} to prove Theorems \ref{a-thm1.6} and 
\ref{a-thm1.7}. 
In Section \ref{a-sec6}, we discuss lengths of extremal 
rational curves for qlc pairs (see Theorem \ref{a-thm1.8}). 
In Section \ref{v-sec7}, we treat the notion of 
Mori hyperbolicity for quasi-log canonical pairs. 

\begin{conventions}
We work over $\mathbb C$, the complex number field, 
throughout this paper. 
A {\em{scheme}} means a separated scheme of 
finite type over $\mathbb C$. 
A {\em{variety}} means an integral scheme, that is, 
an irreducible and reduced separated scheme of finite type 
over $\mathbb C$. Let $f:Y\to X$ be a proper birational morphism 
between varieties. Then $\Exc(f)$ denotes the {\em{exceptional locus}} of 
$f$. 
We freely use the basic notation of the minimal model 
program 
as in \cite{fujino-fund} and 
\cite{fujino-foundations}. 
For the details of the theory of quasi-log 
schemes, see \cite[Chapter 6]{fujino-foundations}. 
For the details of semi-log canonical pairs, 
we recommend the reader to 
see \cite{fujino-slc} and \cite{kollar}. 
\end{conventions}

\begin{ack}
The author was partially 
supported by JSPS KAKENHI Grant Numbers 
JP16H03925, JP16H06337. 
He first started to prepare this paper as a continuation of 
\cite{fujino-wenfei}. He thanks Wenfei Liu, 
who is the coauthor of 
\cite{fujino-wenfei}, very much for many useful discussions. 
\end{ack}

\section{Preliminaries}\label{p-sec2}
In this section, let us briefly recall some basic definitions. 
For the details, see \cite{fujino-fund}, 
\cite{fujino-foundations}, and \cite{kollar-rational}. 
We also recommend the reader to see \cite[Section 2]{fujino-slc-trivial} 
for the theory of quasi-log schemes. 

\medskip 

Let us explain singularities of pairs and some related 
definitions. 

\begin{defn}[Singularities of pairs]\label{p-def2.1}
A {\em{normal pair}} $(X, \Delta)$ consists of a normal 
variety $X$ and an $\mathbb R$-divisor 
$\Delta$ on $X$ such that $K_X+\Delta$ is $\mathbb R$-Cartier. 
Let $f\colon Y\to X$ be a projective 
birational morphism from a normal variety $Y$. 
Then we can write 
$$
K_Y=f^*(K_X+\Delta)+\sum _E a(E, X, \Delta)E
$$ 
with 
$$f_*\left(\underset{E}\sum a(E, X, \Delta)E\right)=-\Delta, 
$$ 
where $E$ runs over prime divisors on $Y$. 
We call $a(E, X, \Delta)$ the {\em{discrepancy}} of $E$ with 
respect to $(X, \Delta)$. 
Note that we can define the discrepancy $a(E, X, \Delta)$ for 
any prime divisor $E$ over $X$ by taking a suitable 
resolution of singularities of $X$. 
If $a(E, X, \Delta)\geq -1$ (resp.~$>-1$) for 
every prime divisor $E$ over $X$, 
then $(X, \Delta)$ is called {\em{sub log canonical}} (resp.~{\em{sub 
kawamata log terminal}}). 
We further assume that $\Delta$ is effective. 
Then $(X, \Delta)$ is 
called {\em{log canonical}} and {\em{kawamata log terminal}} 
if it is sub log canonical and sub kawamata log terminal, respectively. 

Let $(X, \Delta)$ be a log canonical pair. If there 
exists a projective birational morphism 
$f:Y\to X$ from a smooth variety $Y$ such that 
both $\Exc(f)$ and  $\Exc(f)\cup \Supp f^{-1}_*\Delta$ are simple 
normal crossing divisors on $Y$ and that 
$a(E, X, \Delta)>-1$ holds for every $f$-exceptional divisor $E$ on $Y$, 
then $(X, \Delta)$ is called {\em{divisorial log terminal}} ({\em{dlt}}, for short). 

Let $(X, \Delta)$ be a normal pair. 
If there exist a projective birational morphism 
$f\colon Y\to X$ from a normal variety $Y$ and a prime divisor $E$ on $Y$ 
such that $(X, \Delta)$ is 
sub log canonical in a neighborhood of the 
generic point of $f(E)$ and that 
$a(E, X, \Delta)=-1$, then $f(E)$ is called a {\em{log canonical center}} of 
$(X, \Delta)$. 
\end{defn}

\begin{defn}[Operations for $\mathbb Q$-divisors and 
$\mathbb R$-divisors]\label{p-def2.2} 
Let $X$ be an equidimensional reduced scheme. 
Note that $X$ is not necessarily regular in codimension one. 
Let $D$ be an $\mathbb R$-divisor (resp.~a $\mathbb Q$-divisor), 
that is, 
$D$ is a finite formal sum $\sum _i d_iD_i$, where 
$D_i$ is an irreducible reduced closed subscheme of $X$ of 
pure codimension one and $d_i$ is a real number (resp.~a rational 
number) for every $i$ 
such that $D_i\ne D_j$ for $i\ne j$. 
We put 
\begin{equation*}
D^{<c} =\sum _{d_i<c}d_iD_i, \quad 
D^{\leq c}=\sum _{d_i\leq c} d_i D_i, \quad 
D^{= 1}=\sum _{d_i= 1} D_i, \quad \text{and} \quad
\lceil D\rceil =\sum _i \lceil d_i \rceil D_i, 
\end{equation*}
where $c$ is any real number and 
$\lceil d_i\rceil$ is the integer defined by $d_i\leq 
\lceil d_i\rceil <d_i+1$. Similarly, we put 
$$
D^{>c} =\sum _{d_i>c}d_iD_i \quad \text{and} \quad 
D^{\geq c}=\sum _{d_i\geq c} d_i D_i 
$$
for any real number $c$. 
Moreover, we put $\lfloor D\rfloor =-\lceil -D\rceil$ and 
$\{D\}=D-\lfloor D\rfloor$. 

Let $D$ be an $\mathbb R$-divisor (resp.~a $\mathbb Q$-divisor) 
as above. 
We call $D$ a {\em{subboundary}} $\mathbb R$-divisor 
(resp.~$\mathbb Q$-divisor) 
if $D=D^{\leq 1}$ holds. 
When $D$ is effective and $D=D^{\leq 1}$ holds, 
we call $D$ a {\em{boundary}} 
$\mathbb R$-divisor (resp.~$\mathbb Q$-divisor). 

Let $\Delta_1$ and $\Delta_2$ be $\mathbb R$-Cartier 
(resp.~$\mathbb Q$-Cartier) divisors on $X$. 
Then $\Delta_1\sim _{\mathbb R} \Delta_2$ (resp.~$\Delta_1\sim 
_{\mathbb Q}\Delta_2$) means that $\Delta_1$ is $\mathbb R$-linearly 
(resp.~$\mathbb Q$-linearly) equivalent to $\Delta_2$. 
\end{defn}

In this paper, we need the notion of {\em{multiplier ideal 
sheaves}}. Although it is well known, 
we recall it here for the reader's convenience. 

\begin{defn}[Multiplier ideal sheaves and non-lc ideal sheaves]\label{p-def2.3} 
Let $X$ be a normal variety
and let $\Delta$ be an effective $\mathbb R$-divisor on $X$ 
such that $K_X+\Delta$ is $\mathbb R$-Cartier. 
Let $f:Y\to X$ be a resolution with
$$
K_Y+\Delta_Y=f^*(K_X+\Delta)
$$
such that $\Supp \Delta_Y$ is a simple normal crossing divisor on $Y$. 
We put
$$
\mathcal J(X, \Delta)=f_*\mathcal O_Y(-\lfloor \Delta_Y\rfloor). 
$$
Then $\mathcal J(X, \Delta)$ is an ideal sheaf on $X$ 
and is known as the {\em{multiplier ideal sheaf}} 
associated to the pair $(X, \Delta)$. 
It is independent 
of the resolution $f:Y\to X$. The closed subscheme $\Nklt(X, \Delta)$ 
defined by $\mathcal J(X, \Delta)$ is called 
the {\em{non-klt locus}} of $(X, \Delta)$. 
It is obvious that $(X, \Delta)$ is kawamata log terminal 
if and only if $\mathcal J(X, \Delta)=\mathcal O_X$. 
Similarly, we put 
$$\mathcal J_{\mathrm{NLC}}(X, \Delta)=f_*\mathcal O_X(-\lfloor 
\Delta_Y\rfloor+\Delta^{=1}_Y)$$
and call it the {\em{non-lc ideal sheaf}} associated to the pair $(X, \Delta)$. 
We can check that it is independent of the resolution $f:Y\to X$. The
closed subscheme $\Nlc(X, \Delta)$ defined by 
$\mathcal J_{\mathrm{NLC}}(X, \Delta)$ 
is called the {\em{non-lc locus}} of 
$(X, \Delta)$. It is obvious that $(X, \Delta)$ is log canonical if and only if 
$\mathcal J_{\mathrm{NLC}}(X, \Delta)=\mathcal O_X$. 

By definition, the natural inclusion 
$$
\mathcal J(X, \Delta)\subset \mathcal J_{\mathrm{NLC}}(X, \Delta)
$$ 
always holds. Therefore, 
we have 
$$
\Nlc(X, \Delta)\subset \Nklt(X, \Delta). 
$$
\end{defn}
 
For the details of $\mathcal J(X, \Delta)$ and $\mathcal J_{\mathrm{NLC}}(X, 
\Delta)$, see \cite{fujino-non-lc}, 
\cite[Section 7]{fujino-fund}, and \cite[Chapter 9]{lazarsfeld}. 

\begin{defn}[Semi-log canonical pairs]\label{p-def2.4} 
Let $X$ be an equidimensional scheme which 
satisfies Serre's $S_2$ condition and 
is normal crossing in codimension one. Let $\Delta$ 
be an effective $\mathbb R$-divisor on $X$ 
such that no irreducible component of $\Supp \Delta$ 
is contained in the singular locus of $X$ and that 
$K_X+\Delta$ is $\mathbb R$-Cartier. 
We say that $(X, \Delta)$ is a {\em{semi-log canonical}} pair if 
$(X^\nu, \Delta_{X^\nu})$ is log canonical 
in the usual sense, where $\nu:X^\nu\to X$ 
is the normalization of $X$ and 
$K_{X^\nu}+\Delta_{X^\nu}=\nu^*(K_X+\Delta)$, 
that is, $\Delta_{X^\nu}$ is the sum of the inverse 
images of $\Delta$ 
and the conductor of $X$. An {\em{slc center}} of $(X, \Delta)$ 
is the $\nu$-image of a log canonical center of $(X^\nu, \Delta_{X^\nu})$. 
An {\em{slc stratum}} of $(X, \Delta)$ 
means either an slc center of $(X, \Delta)$ or an 
irreducible component of $X$. 
\end{defn}

We need the notion of {\em{globally embedded simple normal crossing pairs}} 
for the theory of quasi-log schemes described in \cite[Chapter 6]{fujino-foundations}. 

\begin{defn}[Globally embedded simple normal crossing pairs]\label{p-def2.5} 
Let $Z$ be a simple normal crossing 
divisor on a smooth variety $M$ and let $B$ be 
an $\mathbb R$-divisor 
on $M$ such that 
$Z$ and $B$ have no common irreducible components and 
that the support of $Z+B$ is a simple normal crossing divisor on $M$. In this 
situation, $(Z, B|_Z)$ is called a {\em{globally embedded simple 
normal crossing pair}}.
\end{defn}

Let us quickly recall the definition of 
{\em{quasi-log canonical pairs}}. 

\begin{defn}[Quasi-log canonical pairs]\label{p-def2.6}
Let $X$ be a scheme and let $\omega$ be an 
$\mathbb R$-Cartier divisor (or an $\mathbb R$-line bundle) on $X$. 
Let $f:Z\to X$ be a proper morphism from a globally embedded 
simple normal 
crossing pair $(Z, \Delta_Z)$. If the natural map 
$$\mathcal O_X\to f_*\mathcal O_Z(\lceil -(\Delta_Z^{<1})\rceil)$$ is an 
isomorphism, $\Delta_Z$ is a subboundary $\mathbb R$-divisor, 
and $f^*\omega\sim _{\mathbb R} K_Z+\Delta_Z$ holds, 
then $$\left(X, \omega, f:(Z, \Delta_Z)\to X\right)$$ 
is called a {\em{quasi-log canonical pair}} 
({\em{qlc pair}}, for short). If there is no danger of confusion, 
we simply say that $[X, \omega]$ is a qlc pair. 
We usually call $\omega$ the {\em{quasi-log canonical 
class}} of $[X, \omega]$.  

We say that $\left(X, \omega, f:(Z, \Delta_Z)\to X\right)$ or $[X, \omega]$ 
has a {\em{$\mathbb Q$-structure}} if $\Delta_Z$ is a $\mathbb Q$-divisor, 
$\omega$ is a $\mathbb Q$-Cartier divisor (or a $\mathbb Q$-line bundle), 
and $f^*\omega\sim_{\mathbb Q} K_Z+\Delta_Z$ holds in 
the above definition. 
\end{defn}

We can define {\em{qlc Fano pairs}} as follows. 

\begin{defn}[Qlc Fano pairs]\label{p-def2.7}
Let $[X, \omega]$ be a projective qlc pair such that 
$-\omega$ is ample. 
Then we simply say that $[X, \omega]$ is a 
{\em{qlc Fano pair}}. 
\end{defn}

The notion of {\em{qlc strata}} plays a crucial role in the theory of 
quasi-log schemes. 

\begin{defn}[Qlc strata and qlc centers]\label{p-def2.8}
Let $\left(X, \omega, f:(Z, \Delta_Z)\to X\right)$ be a quasi-log canonical 
pair as in 
Definition \ref{p-def2.6}. 
Let $\nu:Z^\nu\to Z$ be the normalization. 
We put $$K_{Z^\nu}+\Theta=\nu^*(K_Z+\Delta_Z),$$ that is, 
$\Theta$ is the sum of the inverse images of $\Delta_Z$ and 
the singular locus of $Z$. 
Then $(Z^\nu, \Theta)$ is sub log canonical. 
Let $W$ be a log canonical center of $(Z^\nu, \Theta)$ 
or 
an irreducible component of $Z^\nu$. 
Then $f\circ \nu(W)$ is called a {\em{qlc stratum}} of 
$\left(X, \omega, f:(Z, \Delta_Z)\to X\right)$. 
If there is no danger of confusion, we simply call it 
a qlc stratum of 
$[X, \omega]$. 
If $C$ is a qlc stratum of $[X, \omega]$ but is not 
an irreducible component of $X$, then $C$ is called 
a {\em{qlc center}} of $[X, \omega]$. 
The union of all qlc centers of $[X, \omega]$ is denoted by 
$\Nqklt (X, \omega)$ (see \cite[Notation 6.3.10]{fujino-foundations}). 
It is important that 
$$[\Nqklt (X, \omega), \omega|_{\Nqklt(X, \omega)}]$$ 
naturally has 
a quasi-log canonical 
structure induced from $\left(X, \omega, f:(Z, \Delta_Z)\to X\right)$ 
by adjunction (see \cite[Theorem 6.3.5 (i)]
{fujino-foundations}). 
\end{defn}

We recall the main result of \cite{fujino-slc}, 
which makes the theory of quasi-log schemes (see 
\cite[Chapter 6]{fujino-foundations}) 
useful for the study of 
semi-log canonical pairs. 

\begin{thm}[{\cite[Theorem 1.2]{fujino-slc}}]\label{p-thm2.9} 
Let $(X, \Delta)$ be a quasi-projective 
semi-log canonical pair. Then 
$[X, K_X+\Delta]$ becomes a quasi-log canonical pair 
such that $W$ is an slc stratum of $(X, \Delta)$ if and only if 
$W$ is a qlc stratum of $[X, K_X+\Delta]$. 
\end{thm}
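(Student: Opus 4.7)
The plan is to build an explicit quasi-log canonical structure on $[X,K_X+\Delta]$ by passing through the normalization and a log resolution, and then to read off the strata correspondence directly from the construction.

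First I would set $\nu\colon X^\nu\to X$ to be the normalization and write $K_{X^\nu}+\Delta_{X^\nu}=\nu^*(K_X+\Delta)$, so that $\Delta_{X^\nu}$ is the sum of the strict transform of $\Delta$ and the conductor with coefficient one. By the very definition of a semi-log canonical pair (Definition \ref{p-def2.4}), $(X^\nu,\Delta_{X^\nu})$ is log canonical in the usual sense. Since $X$ is quasi-projective, $X^\nu$ embeds in a smooth quasi-projective ambient variety $M$ (after compactification if needed), and I would take an embedded log resolution there. This produces a globally embedded simple normal crossing pair $(Z,\Delta_Z)$ together with a proper morphism $f\colon Z\to X$ factoring as $f=\nu\circ g$, where $g\colon Z\to X^\nu$ is a log resolution of $(X^\nu,\Delta_{X^\nu})$. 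Then $\Delta_Z$ is automatically a subboundary because $(X^\nu,\Delta_{X^\nu})$ is log canonical, and $f^*(K_X+\Delta)\sim_{\mathbb R}K_Z+\Delta_Z$ by construction.

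The main obstacle is verifying the defining isomorphism
$$
\mathcal O_X\;\xrightarrow{\;\sim\;}\;f_*\mathcal O_Z\bigl(\lceil -\Delta_Z^{<1}\rceil\bigr).
$$
I would handle this in two stages. Using the Leray spectral sequence for $f=\nu\circ g$ together with the Kollár-type vanishing and torsion-freeness package for simple normal crossing pairs developed in \cite[Chapter 5]{fujino-foundations}, I would first identify $g_*\mathcal O_Z(\lceil -\Delta_Z^{<1}\rceil)$ with $\mathcal O_{X^\nu}(-\lfloor\Delta_{X^\nu}^{=1}\rfloor)$, which in the lc case coincides with the ideal sheaf cutting out the reduced non-klt locus of $(X^\nu,\Delta_{X^\nu})$, containing in particular the conductor on $X^\nu$. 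The question then reduces to $\nu_*\mathcal O_{X^\nu}(-C^\nu)\cong\mathcal O_X$, where $C^\nu$ is the conductor divisor on $X^\nu$. Because $X$ is $S_2$ and normal crossing in codimension one, this last identity is the standard local computation on the normalization of a nodal singularity, extended to higher codimension by $S_2$-ness. This combined vanishing/pushforward statement is the technical core.

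Finally, to match the strata, observe that the qlc strata of the constructed structure are by Definition \ref{p-def2.8} the $f$-images of the irreducible components of $Z$ and of the log canonical centers of $(Z^\nu,\Theta)$; these correspond under $g$ bijectively to the lc centers and irreducible components of $(X^\nu,\Delta_{X^\nu})$, and hence their $\nu$-images in $X$ are by Definition \ref{p-def2.4} exactly the slc strata of $(X,\Delta)$. This yields the equivalence ``$W$ is an slc stratum iff $W$ is a qlc stratum'' in both directions, while the hard analytic content sits entirely in the pushforward isomorphism above.
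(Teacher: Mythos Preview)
The paper does not prove this statement; it is quoted from \cite{fujino-slc} and the reader is referred there for the argument. So the comparison has to be with the actual construction in \cite{fujino-slc}, and against that your proposal has a genuine gap.

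Your plan is to normalize and then take an ordinary log resolution $g\colon Z\to X^\nu$, with $Z$ smooth (a disjoint union of smooth varieties). In that situation $\Delta_Z^{<1}$ consists of components with coefficients in $(-\infty,1)$, and since $(X^\nu,\Delta_{X^\nu})$ is log canonical with $\Delta_{X^\nu}$ effective, the divisor $\lceil -\Delta_Z^{<1}\rceil$ is \emph{effective} and $g$-exceptional. Hence
\[
g_*\mathcal O_Z\bigl(\lceil -\Delta_Z^{<1}\rceil\bigr)=\mathcal O_{X^\nu},
\]
not the ideal sheaf $\mathcal O_{X^\nu}(-\lfloor\Delta_{X^\nu}^{=1}\rfloor)$ you claim. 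Pushing forward once more gives
\[
f_*\mathcal O_Z\bigl(\lceil -\Delta_Z^{<1}\rceil\bigr)=\nu_*\mathcal O_{X^\nu},
\]
which is strictly larger than $\mathcal O_X$ whenever $X$ is non-normal. So the defining isomorphism in Definition~\ref{p-def2.6} simply fails for your $(Z,\Delta_Z)$, and no amount of vanishing or $S_2$ argument repairs it: the conductor ideal does not appear because you never twisted down by the $=1$ part of $\Delta_Z$.

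The point you are missing is that the source $(Z,\Delta_Z)$ in \cite{fujino-slc} is \emph{not} smooth; it is a genuinely reducible simple normal crossing divisor whose double locus is arranged to lie over the double locus of $X$. Concretely, Fujino builds a semi-resolution (or, in the quasi-projective case, an embedded model) so that components of $Z$ coming from different irreducible components of $X$ are glued along divisors mapping to the conductor. It is this gluing that forces $f_*\mathcal O_Z(\lceil -\Delta_Z^{<1}\rceil)$ to be $\mathcal O_X$ rather than $\nu_*\mathcal O_{X^\nu}$. Factoring through the normalization destroys exactly this information, which is why the quasi-projectivity hypothesis is genuinely used (see Remark~\ref{a-rem4.1}) and why the construction is substantially more delicate than ``normalize and resolve.''
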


For the details of Theorem \ref{p-thm2.9}, 
we recommend the reader to see \cite{fujino-slc}. 

\begin{rem}\label{p-rem2.10} 
By combining Theorem \ref{p-thm2.9} with 
adjunction for quasi-log canonical pairs (see 
\cite[Theorem 6.3.5 (i)]{fujino-foundations}), 
we see that any union $V$ of slc strata of a given quasi-projective 
semi-log canonical pair $(X, \Delta)$ becomes a quasi-log canonical 
pair, that is, $[V, (K_X+\Delta)|_V]$ is a quasi-log canonical pair. 
\end{rem}

We collect some basic properties of qlc strata for the reader's convenience. 

\begin{prop}[Basic properties of qlc strata]\label{p-prop2.11}
Let $[X, \omega]$ be a quasi-log canonical pair. 
Then its qlc strata have the following nice properties.  
\begin{itemize}
\item[(i)] there is a unique minimal {\em{(}}with respect to 
the inclusion{\em{)}} qlc stratum through a given point, 
\item[(ii)] the minimal qlc stratum 
at a given point is normal at that point, and 
\item[(iii)] the intersection of two qlc strata is a union of qlc strata.
\end{itemize} 
If $X$ is additionally a connected projective scheme 
and $-\omega$ is ample, 
that is, $[X, \omega]$ is a connected qlc Fano pair, 
then 
\begin{itemize}
\item[(iv)] any union of qlc strata of $[X, \omega]$ is connected, and 
\item[(v)] there is a unique minimal qlc 
stratum of $[X, \omega]$,  which is normal. 
\end{itemize}
\end{prop}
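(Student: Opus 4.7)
My plan is to deduce (i)--(iii) from the structure of qlc strata inside the defining globally embedded simple normal crossing pair of Definition \ref{p-def2.6}, and to derive (iv)--(v) from the vanishing theorem for qlc pairs together with the previous items.

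For (iii) I would work on the normalization $Z^\nu$ of the ambient snc pair: the log canonical centers of $(Z^\nu,\Theta)$ together with the irreducible components of $Z^\nu$ form a family closed under taking irreducible components of intersections, since this is a local statement for simple normal crossing pairs and their normalizations. Pushing down via $f\circ\nu$ then shows that every irreducible component of the intersection of two qlc strata of $[X,\omega]$ is itself a qlc stratum, which is (iii). Property (i) follows immediately by Noetherian induction: given any two qlc strata $W_1,W_2$ through a point $x$, (iii) produces a qlc stratum through $x$ contained in $W_1\cap W_2$, so dimension descent yields a unique minimum.

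For (ii), I would apply Theorem \ref{a-thm1.1} to the minimal qlc stratum $W$ through $x$. Minimality forces $x\notin \Nqklt(W,\omega|_W)$, so for any ample $H$ on $W^\nu$ the resulting boundary $\Delta$ satisfies $\Nklt(W^\nu,\Delta)=\nu^{-1}\Nqklt(W,\omega|_W)$, and hence $(W^\nu,\Delta)$ is kawamata log terminal in a neighborhood of every point over $x$. To conclude that $\nu$ itself is an isomorphism at $x$ (so that $W$ is normal there), I would invoke the structure theorem \cite[Theorem 1.7]{fujino-slc-trivial} applied to the unique irreducible component of $W$ through $x$. This final step is where I expect the main difficulty: passing from klt-ness of $(W^\nu,\Delta)$ upstairs to the isomorphism property of $\nu$ downstairs genuinely uses the Hodge-theoretic input behind \cite[Theorem 1.7]{fujino-slc-trivial}, which is precisely the deep ingredient the paper has highlighted.

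For (iv), given any union $V$ of qlc strata of $[X,\omega]$, adjunction (\cite[Theorem 6.3.5 (i)]{fujino-foundations}) makes $[V,\omega|_V]$ qlc, and the qlc vanishing theorem (\cite[Theorem 6.3.5 (ii)]{fujino-foundations}) applied with $L=\mathcal O_X$ gives $H^1(X,\mathcal I_V)=0$, since $\mathcal O_X-\omega=-\omega$ is ample. The short exact sequence $0\to\mathcal I_V\to\mathcal O_X\to\mathcal O_V\to 0$ then forces $H^0(V,\mathcal O_V)=\mathbb C$ because $H^0(X,\mathcal O_X)=\mathbb C$, so $V$ is connected. Finally (v) follows formally: if there were two distinct minimal qlc strata $W_1\ne W_2$, then (iv) would make $W_1\cup W_2$ connected, hence $W_1\cap W_2\ne\emptyset$; but by (iii) this intersection is a union of qlc strata, producing a qlc stratum strictly smaller than each $W_i$ and contradicting minimality. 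The unique minimal stratum is irreducible (being the image of a single log canonical center of $(Z^\nu,\Theta)$ or an irreducible component of $Z^\nu$), and (ii) applied at each of its points upgrades pointwise normality to global normality.
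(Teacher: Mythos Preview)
Your treatment of (iv) and (v) matches the paper's proof essentially verbatim: the paper also derives (iv) from $H^1(X,\mathcal I_V)=0$ via \cite[Theorem 6.3.5 (ii)]{fujino-foundations} and the surjection $H^0(X,\mathcal O_X)\to H^0(V,\mathcal O_V)$, and then records (v) as a formal consequence of (i)--(iv).

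Where you diverge is in (i)--(iii). The paper does not prove these here at all; it simply cites \cite[Theorem 6.3.11]{fujino-foundations}, where they are established from the definition of qlc pairs and the basic vanishing package, independently of anything in the present paper. Your reconstruction of (i) from (iii) is fine, and your idea for (iii) is in the right spirit, though the step ``pushing down via $f\circ\nu$'' is too quick: an irreducible component of $W_1\cap W_2$ downstairs need not arise as the image of an irreducible component of an intersection of strata upstairs, so one really needs the adjunction and connectedness machinery from \cite{fujino-foundations}, not just the combinatorics of the snc model.

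The genuine gap is in your argument for (ii). You invoke Theorem \ref{a-thm1.1} and then \cite[Theorem 1.7]{fujino-slc-trivial} to conclude that $\nu:W^\nu\to W$ is an isomorphism at $x$, but neither result gives this. Theorem \ref{a-thm1.1} only tells you that $(W^\nu,\Delta)$ is klt over $x$, which says nothing about whether $W$ itself is normal there; and the structure theorem \cite[Theorem 1.7]{fujino-slc-trivial} (Theorem \ref{a-thm3.2} here) already \emph{assumes} the underlying variety is normal, so it cannot be used to deduce normality. You flag this as ``the main difficulty'' but do not resolve it. There is also a logical awkwardness: Proposition \ref{p-prop2.11} is recorded as background in Section \ref{p-sec2}, prior to and independent of Theorem \ref{a-thm1.1}, which is the main new result of the paper; building (ii) on Theorem \ref{a-thm1.1} inverts that dependency and, besides, imports the quasi-projectivity hypothesis on $W^\nu$ that you have no reason to have. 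The clean route is the one the paper takes: appeal to \cite[Theorem 6.3.11]{fujino-foundations}, where normality of the minimal stratum at a point is proved directly from the qlc formalism.
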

\begin{proof}[Sketch of Proof of Proposition \ref{p-prop2.11}] 
For (i), (ii), and (iii), see \cite[Theorem 6.3.11]{fujino-foundations}. 
For (iv),  it is sufficient to show that 
$H^0(V, \mathcal O_V)=\mathbb C$ for any union 
$V$ of qlc strata of $[X, \omega]$. 
Since $-\omega$ is ample, 
we have $H^1(X, \mathcal I_V)=0$, where 
$\mathcal I_V$ is the defining ideal sheaf of $V$ on $X$, by 
\cite[Theorem 6.3.5 (ii)]{fujino-foundations}. 
Therefore, the surjection 
$$
\mathbb C=H^0(X, \mathcal O_X)\to H^0(V, \mathcal O_V)\to 0
$$ 
implies $H^0(V, \mathcal O_V)=\mathbb C$. 
Finally, we note that (v) is a direct consequence of (i), (ii), (iii) and (iv).
\end{proof}

For the details of the theory of quasi-log schemes, 
we recommend the reader to see \cite[Chapter 6]{fujino-foundations}. 

\medskip 

We close this section with the definition of {\em{rationally chain 
connected schemes}}. 

\begin{defn}[Rationally chain connected schemes]\label{p-def2.12}
A projective scheme $X$ is {\em{rationally chain 
connected}} if and only if 
for arbitrary closed points $x_1, x_2\in X$ there exists 
a connected curve $C\subset X$ which contains 
$x_1$ and $x_2$ such that 
every irreducible component of $C$ is rational. 
\end{defn}

For the details of rationally chain connected schemes and 
various related topics, 
see \cite{kollar-rational}. 

\section{Quick review of \cite{fujino-slc-trivial}}\label{a-sec3}

In this section, we quickly look at the structure theorem 
for normal irreducible quasi-log canonical pairs obtained in 
\cite{fujino-slc-trivial}. 
Theorem \ref{a-thm3.2} is the main ingredient of this paper. 

\medskip

Let us recall the definition of {\em{potentially nef}} divisors 
in order to explain Theorem \ref{a-thm3.2}. 

\begin{defn}[Potentially nef divisors]\label{a-def3.1}
Let $X$ be a normal variety and let $D$ be a divisor on $X$. 
If there exist a completion $\overline X$ of $X$, 
that is, $\overline X$ is a normal complete 
variety and contains $X$ as a dense Zariski open set, and 
a nef divisor $\overline D$ on $\overline X$ such that 
$D=\overline D|_X$, then $D$ is called 
a {\em{potentially nef}} divisor on $X$. 
A finite $\mathbb R_{>0}$-linear (resp.~$\mathbb Q_{>0}$-linear) 
combination of potentially nef divisors is called 
a {\em{potentially nef}} $\mathbb R$-divisor 
(resp.~$\mathbb Q$-divisor). 
\end{defn}

For the basic properties of potentially nef divisors, 
we recommend the reader to see \cite[Section 2]{fujino-slc-trivial}. 

\medskip

The following theorem will play a crucial role in the theory of quasi-log 
schemes (see \cite{fujino-slc-trivial}, 
\cite{fujino-haidong2}, and \cite{fujino-haidong3}). 

\begin{thm}[{Structure theorem for 
normal irreducible quasi-log canonical pairs, see 
\cite[Theorem 1.7]{fujino-slc-trivial}}]\label{a-thm3.2}
Let $[X, \omega]$ be a quasi-log canonical pair such that 
$X$ is a normal variety. 
Then there exists a projective 
birational morphism 
$p:X'\to X$ from a smooth quasi-projective 
variety $X'$ such that 
$$
K_{X'}+B_{X'}+M_{X'}=p^*\omega, 
$$ 
where $B_{X'}$ is a subboundary $\mathbb R$-divisor 
such that $\Supp B_{X'}$ is a simple normal crossing 
divisor and that $B^{<0}_{X'}$ is 
$p$-exceptional, 
and $M_{X'}$ is a potentially nef $\mathbb R$-divisor on $X'$. 
Furthermore, we can make $B_{X'}$ satisfy 
$p(B^{=1}_{X'})=\Nqklt(X, \omega)$. 

We further assume that $[X, \omega]$ has a $\mathbb Q$-structure. 
Then we can make $B_{X'}$ and $M_{X'}$ $\mathbb Q$-divisors 
in the above statement. 
\end{thm}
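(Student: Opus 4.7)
The strategy is to build the birational model $X'$ by combining a quasi-log resolution of $[X,\omega]$ with the canonical bundle formula arising from the theory of variations of mixed Hodge structure developed in \cite{fujino-fujisawa}.

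First, I would fix a quasi-log datum $f:(Z,\Delta_Z)\to X$ as in Definition \ref{p-def2.6}. Since $X$ is a normal irreducible variety and is therefore itself a qlc stratum of $[X,\omega]$, some irreducible component $Z_0$ of the normalization $Z^\nu$ (or a log canonical center of $(Z^\nu,\Theta)$) has image $X$ under $f\circ\nu$. After a sequence of blow-ups of the ambient smooth variety of $(Z,\Delta_Z)$, I may assume that the induced morphism $g:Z_0\to X$ is a projective surjective morphism from a smooth quasi-projective variety, and that $(Z_0,\Delta_{Z_0})$ is sub log canonical with simple normal crossing support, where $K_{Z_0}+\Delta_{Z_0}=(K_Z+\Delta_Z)|_{Z_0}$. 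In particular $K_{Z_0}+\Delta_{Z_0}\sim_{\mathbb R}g^*\omega$, and this relation is trivial on a general fiber of $g$, which is precisely the hypothesis needed to run a canonical bundle formula along $g$.

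Next, I would apply the canonical bundle formula of \cite{fujino-fujisawa} to $g:(Z_0,\Delta_{Z_0})\to X$. After passing to a sufficiently high log resolution $p:X'\to X$ with $X'$ smooth quasi-projective, this yields a decomposition
\[
p^*\omega\sim_{\mathbb R}K_{X'}+B_{X'}+M_{X'},
\]
where $B_{X'}$ is the discriminant $\mathbb R$-divisor and $M_{X'}$ is the moduli $\mathbb R$-divisor. The crucial output of the Hodge-theoretic machinery is that, after a further birational modification, $M_{X'}$ extends to a nef $\mathbb R$-divisor on a normal completion of $X'$, and is therefore potentially nef in the sense of Definition \ref{a-def3.1}. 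By construction $B_{X'}$ is a subboundary whose support, together with $\Supp(M_{X'})$, is simple normal crossing, and $B^{<0}_{X'}$ is $p$-exceptional because its negative components arise only from discrepancies picked up in the lift $p$.

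Finally, I would verify the locus condition. Tracking which divisors on $Z_0$ appear with coefficient one in $\Delta_{Z_0}$, one sees that a prime divisor on $X'$ contributes to $B^{=1}_{X'}$ exactly when the corresponding log canonical center of $(Z_0,\Delta_{Z_0})$ descends to a qlc center of $[X,\omega]$, so that $p(B^{=1}_{X'})=\Nqklt(X,\omega)$. The $\mathbb Q$-version follows by noting that, when $[X,\omega]$ has a $\mathbb Q$-structure, every ingredient—the boundary $\Delta_Z$, the discriminant $B_{X'}$, and the Hodge-theoretic moduli $M_{X'}$—can be chosen with rational coefficients. The main obstacle is the construction and potential nefness of $M_{X'}$: this is not a formal consequence of birational manipulations and rests on the semipositivity results for moduli parts in canonical bundle formulas established via variations of mixed Hodge structure in \cite{fujino-fujisawa}; the remainder of the argument is careful bookkeeping with log pullbacks and resolutions.
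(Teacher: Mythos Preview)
The paper does not give its own proof of Theorem~\ref{a-thm3.2}; the result is quoted from \cite[Theorem 1.7]{fujino-slc-trivial}, and the only indication of method is the paragraph following the statement. There the author explains that the proof in \cite{fujino-slc-trivial} proceeds by introducing \emph{basic slc-trivial fibrations}---a canonical bundle formula for fibrations whose total space is the full, generally reducible, simple normal crossing pair $(Z,\Delta_Z)$---and then establishing potential nefness of the moduli part via the variations of mixed Hodge structure on cohomology with compact support developed in \cite{fujino-fujisawa} and \cite{fujino-fujisawa-saito}. So the comparison must be with that description rather than with an in-paper argument.

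Your outline is in the right spirit but takes a shortcut that leaves a genuine gap. You immediately pass to a single irreducible component $Z_0$ of $Z^\nu$ dominating $X$ and run a classical (irreducible) lc-trivial fibration argument on $g:(Z_0,\Delta_{Z_0})\to X$. The difficulty is the locus condition $p(B^{=1}_{X'})=\Nqklt(X,\omega)$. By Definition~\ref{p-def2.8} the qlc centers of $[X,\omega]$ are images of log canonical centers and of irreducible components coming from \emph{all} of $Z^\nu$, not only from $Z_0$. The discriminant computed from $g$ alone records only the centers visible in $(Z_0,\Delta_{Z_0})$, and your assertion that ``a prime divisor on $X'$ contributes to $B^{=1}_{X'}$ exactly when the corresponding log canonical center of $(Z_0,\Delta_{Z_0})$ descends to a qlc center of $[X,\omega]$'' presumes the very identification that needs proof. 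To salvage the single-component route one would have to argue that the double-locus contributions appearing in $\Delta_{Z_0}$ already encode every qlc center produced by the remaining components of $Z$; this is not automatic and is precisely why \cite{fujino-slc-trivial} develops the canonical bundle formula for the reducible $Z$ from the outset rather than reducing to one component. Your identification of the hard analytic input (semipositivity of the moduli part via mixed Hodge theory) is correct, but note that \cite{fujino-fujisawa} supplies the Hodge-theoretic semipositivity while the canonical bundle formula itself, in the generality needed here, is the content of \cite{fujino-slc-trivial}.
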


In \cite{fujino-slc-trivial}, we introduce the notion of 
{\em{basic slc-trivial fibrations}}, which is a kind of canonical bundle 
formula for reducible schemes. 
Then we prove some fundamental properties 
by using the theory of 
variations of mixed Hodge structure on cohomology 
with compact support (see 
\cite{fujino-fujisawa} and \cite{fujino-fujisawa-saito}). 
Theorem \ref{a-thm3.2} (see \cite[Theorem 1.7]{fujino-slc-trivial}) 
is an application of the main result of \cite{fujino-slc-trivial}, 
that is, \cite[Theorem 1.2]{fujino-slc-trivial}. 

\section{Subadjunction for qlc pairs}\label{a-sec4}

In this section, we prove 
Theorem \ref{a-thm1.1}, which is 
a direct consequence of 
Theorem \ref{a-thm3.2}.  
We note that Corollary \ref{a-cor1.4} follows from 
Theorems \ref{a-thm1.1} and \ref{p-thm2.9} 
(see \cite[Theorem 1.2]{fujino-slc}). 

\medskip

Let us start the proof of Theorem \ref{a-thm1.1}. 

\begin{proof}[Proof of Theorem \ref{a-thm1.1}]
By adjunction (see 
\cite[Theorem 6.3.5 (i)]{fujino-foundations}), 
$[W, \omega|_W]$ is a quasi-log canonical 
pair. By \cite[Theorem 1.1]{fujino-haidong}, we see that 
$[W^\nu, \nu^*(\omega|_W)]$ naturally becomes 
a quasi-log canonical pair such that $\Nqklt(W^\nu, 
\nu^*(\omega|_W))=\nu^{-1}\Nqklt(W, \omega|_W)$ holds. 
More precisely, we obtain that the equality 
$$\nu_*\mathcal I_{\Nqklt (W^\nu, \nu^*(\omega|_W))}
=\mathcal I_{\Nqklt(W, \omega|_W)}$$ holds. 
Note that $\mathcal I_{\Nqklt(W, \omega|_W)}$ 
is the defining ideal sheaf of $\Nqklt(W, \omega|_W)$ on 
$W$ and $\mathcal I_{\Nqklt(W^\nu, \nu^*(\omega|_W))}$ is 
that of $\Nqklt(W^\nu, \nu^*(\omega|_W))$ on $W^\nu$. 
By Theorem \ref{a-thm3.2}, 
there is a projective 
birational morphism $p:W'\to W^\nu$ from a smooth quasi-projective 
variety $W'$ such that 
$$
K_{W'}+B_{W'}+M_{W'}=p^*\nu^*(\omega|_W), 
$$ 
where $B_{W'}$ is a subboundary $\mathbb R$-divisor on $W'$ whose 
support is a simple normal crossing divisor, $B^{<0}_{W'}$ is $p$-exceptional, 
$M_{W'}$ is a potentially nef $\mathbb R$-divisor on $W'$, 
and $p(B^{=1}_{W'})=\Nqklt (W^\nu, \nu^*(\omega|_W))$. 
We may further assume that there is an effective 
$p$-exceptional 
divisor $F$ on $W'$ such that $-F$ is $p$-ample and 
that $\Supp 
F\cup \Supp B_{W'}$ is contained in 
a simple normal crossing divisor on $W'$. Then 
$p^*H-\varepsilon F+M_{W'}$ is semi-ample for any $0<\varepsilon \ll 1$. We 
take a general effective $\mathbb R$-divisor $G$ on $W'$ such that $G\sim 
_{\mathbb R} p^*H-\varepsilon F+M_{W'}$ with $0<\varepsilon \ll 1$, 
$\Supp G \cup \Supp B_{W'}\cup \Supp F$ 
is contained in a simple normal crossing divisor on $W'$, 
and $\lfloor (B_{W'}+\varepsilon F+G)^{\geq 1}\rfloor=B^{=1}_{W'}$. 
Then we have 
\begin{equation*}
\begin{split}
K_{W'}+B_{W'}+M_{W'}+p^*H&=K_{W'}+B_{W'}+\varepsilon 
F +p^*H-\varepsilon F +M_{W'}\\&\sim _{\mathbb R} 
K_{W'}+B_{W'} +\varepsilon F +G. 
\end{split}
\end{equation*}
We put $\Delta:=p_*(B_{W'}+\varepsilon F+G)$. 
By construction, $K_{W^\nu}+\Delta\sim _{\mathbb R}
\nu^*(\omega|_W)+H$. 
Let $\mathcal J(W^\nu, \Delta)$ be the multiplier ideal 
sheaf of $(W^\nu, \Delta)$. 
Then $\mathcal J(W^\nu, \Delta)=p_*\mathcal O_{W'}(-
\lfloor B_{W'}+\varepsilon F+G\rfloor)$ by definition 
(see Definition \ref{p-def2.3}). 
Since the effective part of 
$-\lfloor B_{W'}+\varepsilon F+G\rfloor$ is $p$-exceptional, 
we obtain 
\begin{equation}\label{a-eq4.1}
\begin{split}
\mathcal J(W^\nu, \Delta)&=p_*\mathcal O_{W'}(-
\lfloor B_{W'}+\varepsilon F+G\rfloor)\\ 
&=p_*\mathcal O_{W'}(-\lfloor (B_{W'}+\varepsilon F+G)^{\geq 1}\rfloor)\\
&=p_*\mathcal O_{W'}(-B_{W'}^{=1})\\ 
&=\mathcal I_{\Nqklt (W^\nu, \nu^*(\omega|_W))}. 
\end{split}
\end{equation}
As we saw above, by \cite[Theorem 1.1]{fujino-haidong}, we have the 
equality 
\begin{equation}\label{a-eq4.2}
\nu_*\mathcal I_{\Nqklt (W^\nu, \nu^*(\omega|_W))}=
\mathcal I_{\Nqklt (W, \omega|_W)}. 
\end{equation} 
Therefore, we obtain 
$$
\nu_*\mathcal J(W^\nu, \Delta)=
\nu_*\mathcal I_{\Nqklt (W^\nu, \nu^*(\omega|_W))}
=\mathcal I_{\Nqklt (W, \omega|_W)}
$$ 
by \eqref{a-eq4.1} and \eqref{a-eq4.2}. 
Thus we get 
$$
\mathcal J(W^\nu, \Delta)=\mathcal I_{\Nqklt (W^\nu, \nu^*(\omega|_W))}
=\nu^{-1}\mathcal I_{\Nqklt (W, \omega|_W)}\cdot \mathcal O_{W^\nu}. 
$$
This implies that 
\begin{equation*}
\Nklt (W^\nu, \Delta)
=\Nqklt (W^\nu, \nu^*(\omega|_W))
=\nu^{-1}\Nqklt (W, \omega|_W) 
\end{equation*} holds. 
This is what we wanted. 

When $[X, \omega]$ has a $\mathbb Q$-structure, 
we can make $B_{W'}$ and $M_{W'}$ $\mathbb Q$-divisors 
by Theorem \ref{a-thm3.2}. 
Then it is easy to see that we can make $\Delta$ a $\mathbb Q$-divisor 
on $W^\nu$ such that 
$K_{W^\nu}+\Delta\sim _{\mathbb Q} \nu^*(\omega|_W)+H$ 
if $H$ is an ample $\mathbb Q$-divisor and $[X, \omega]$ has a $\mathbb 
Q$-structure by the above construction of $\Delta$. 
\end{proof}

Corollary \ref{a-cor1.4} easily follows from 
Theorems \ref{a-thm1.1} and \ref{p-thm2.9} 
(see \cite[Theorem 1.2]{fujino-slc}). 

\begin{proof}[Proof of Corollary \ref{a-cor1.4}]
By Theorem \ref{p-thm2.9} (see \cite[Theorem 1.2]{fujino-slc}), 
$[X, K_X+\Delta]$ has a 
natural quasi-log canonical structure which is compatible 
with the original semi-log canonical 
structure of $(X, \Delta)$. Then, by adjunction 
(see \cite[Theorem 6.3.5 (i)]{fujino-foundations}), 
$[W, (K_X+\Delta)|_W]$ 
is quasi-log canonical such that 
$\Nqklt (W, (K_X+\Delta)|_W)=E$ 
(see Remark \ref{p-rem2.10}). 
By Theorem \ref{a-thm1.1}, 
we can take an effective $\mathbb R$-divisor $\Delta^\dag$ on $W^\nu$ 
such that $$K_{W^\nu}+\Delta^\dag\sim _{\mathbb R} 
\nu^*\left((K_X+\Delta)|_W\right)+H$$ 
and that $\Nklt (W^\nu, \Delta^\dag)=\nu^{-1}E$. 
More precisely, $$\nu_*\mathcal J(W^\nu, \Delta^\dag)
=\mathcal I_{\Nqklt (W, \omega|_W)}
$$ holds.  
Of course, by Theorem \ref{a-thm1.1}, 
we can make $\Delta^\dag$ a $\mathbb Q$-divisor 
with $$K_{W^\nu}+
\Delta^\dag\sim _{\mathbb Q} \nu^*\left((K_X+\Delta)|_W\right)
+H$$ if $K_X+\Delta$ and $H$ are both $\mathbb Q$-divisors. 
\end{proof}

\begin{rem}\label{a-rem4.1}
In Corollary \ref{a-cor1.4}, if $(X, \Delta)$ is log canonical, that is, 
$X$ is normal,  
then we do not need the assumption that $X$ is quasi-projective. 
This is because $[X, \omega]$, where $\omega:=K_X+\Delta$, always 
has a natural quasi-log canonical structure that is compatible 
with the original log canonical structure of $(X, \Delta)$ (see 
\cite[6.4.1]{fujino-foundations}). 
We do not need the quasi-projectivity of $X$ to construct the 
quasi-log canonical structure on $[X, \omega]$ when $X$ is normal. 
When $X$ is not normal in Corollary \ref{a-cor1.4}, 
we need the quasi-projectivity 
of $X$ to use Theorem \ref{p-thm2.9} 
(see \cite[Theorem 1.2]{fujino-slc}). 
\end{rem}

\section{On qlc Fano pairs}\label{a-sec5}
In this short section, we explain how to modify the arguments 
in \cite{fujino-wenfei} to prove Theorems \ref{a-thm1.6} and \ref{a-thm1.7}. 
Since this section is independent of the other sections, 
the reader can skip it if he or she is not interested in 
qlc Fano pairs. 

\medskip 

We prepare an important lemma, which is an easy application of 
Theorem \ref{a-thm1.1}. 

\begin{lem}[{see \cite[Lemmas 2.3 and 2.6]{fujino-wenfei}}]\label{a-lem5.1}
Let $W$ be a qlc stratum of a connected qlc Fano pair $[X, \omega]$ and 
let $E$ be the union of all qlc strata that are strictly contained 
in $W$. 
We take the normalization $\nu: W^\nu\to W$ of $W$. 
Let $H$ be an ample Cartier divisor on $X$ and 
let $\varepsilon$ be a sufficiently small positive 
real number. 
Then there exists a boundary $\mathbb R$-divisor 
$\Delta$ on $W^\nu$ such that $K_{W^\nu}+\Delta
\sim _{\mathbb R} \nu^*\left((\omega+\varepsilon H)|_W\right)$, 
$\Nklt(W^\nu, \Delta)=\nu^{-1}E$, and $-(K_{W^\nu}+\Delta)$ 
is ample. 
We note that $\Nklt(W^\nu, \Delta)$ is connected since 
$-(K_{W^\nu}+\Delta)$ is ample. 
\end{lem}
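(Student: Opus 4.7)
The plan is to apply Theorem \ref{a-thm1.1} directly to $[X,\omega]$ and the qlc stratum $W$, using a small multiple of $\nu^*(H|_W)$ as the ample $\mathbb R$-divisor on $W^\nu$ so as to absorb the required $\varepsilon H$ perturbation. Concretely, since $\nu\colon W^\nu\to W$ is finite (being a normalization) and $H|_W$ is ample on $W$, the pullback $\nu^*(H|_W)$ is ample on $W^\nu$, so $\varepsilon\,\nu^*(H|_W)$ is an ample $\mathbb R$-divisor on $W^\nu$ for any $\varepsilon>0$. Feeding this into Theorem \ref{a-thm1.1} produces a boundary $\mathbb R$-divisor $\Delta$ on $W^\nu$ with
$$K_{W^\nu}+\Delta\sim_{\mathbb R}\nu^*(\omega|_W)+\varepsilon\,\nu^*(H|_W)=\nu^*\bigl((\omega+\varepsilon H)|_W\bigr)$$
and $\Nklt(W^\nu,\Delta)=\nu^{-1}\Nqklt(W,\omega|_W)$. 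To identify the right-hand side with $\nu^{-1}E$, I would invoke adjunction for qlc pairs (\cite[Theorem 6.3.5(i)]{fujino-foundations}, cf.\ Remark \ref{a-rem1.2}), under which the qlc centers of the induced qlc pair $[W,\omega|_W]$ are exactly the qlc strata of $[X,\omega]$ strictly contained in the (irreducible) stratum $W$; thus $\Nqklt(W,\omega|_W)=E$.

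For the ampleness of $-(K_{W^\nu}+\Delta)$: since $-\omega$ is ample on $X$, the divisor $-\omega-\varepsilon H$ remains ample on $X$ for all sufficiently small $\varepsilon>0$, and pulling its restriction back along the finite morphism $\nu$ preserves ampleness, so
$$-(K_{W^\nu}+\Delta)\sim_{\mathbb R}\nu^*\bigl((-\omega-\varepsilon H)|_W\bigr)$$
is ample on $W^\nu$.

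For the parenthetical connectedness assertion, I would apply the standard Koll\'ar--Shokurov connectedness principle to the normal pair $(W^\nu,\Delta)$: since $-(K_{W^\nu}+\Delta)$ is ample, Nadel vanishing gives $H^1(W^\nu,\mathcal J(W^\nu,\Delta))=0$, so the long exact sequence associated to $0\to\mathcal J(W^\nu,\Delta)\to\mathcal O_{W^\nu}\to\mathcal O_{\Nklt(W^\nu,\Delta)}\to 0$ yields a surjection $\mathbb C=H^0(W^\nu,\mathcal O_{W^\nu})\twoheadrightarrow H^0(\Nklt(W^\nu,\Delta),\mathcal O)$, forcing $\Nklt(W^\nu,\Delta)$ to be connected (using that $W^\nu$ is connected because qlc strata are irreducible). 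The only delicate point in the entire argument is the identification of $\Nqklt(W,\omega|_W)$ with $E$ via adjunction; everything else is formal once Theorem \ref{a-thm1.1} is in hand.
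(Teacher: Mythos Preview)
Your proof is correct and follows the same approach as the paper's: both apply Theorem \ref{a-thm1.1} with the ample divisor $\varepsilon\,\nu^*(H|_W)$ on $W^\nu$ to produce $\Delta$, identify $\Nqklt(W,\omega|_W)$ with $E$ via adjunction, and deduce connectedness of $\Nklt(W^\nu,\Delta)$ from Nadel vanishing. Your write-up is simply more explicit than the paper's about each of these steps, including the reason $-(K_{W^\nu}+\Delta)$ is ample.
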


\begin{proof}
We note that $-(\omega+\varepsilon H)$ is ample for any 
sufficiently small positive real number $\varepsilon$. 
By Theorem \ref{a-thm1.1}, 
we can take a boundary $\mathbb R$-divisor 
$\Delta$ on $W^\nu$ with $K_{W^\nu}+\Delta\sim 
_{\mathbb R} \nu^*\left((\omega+\varepsilon H)|_W\right)$ and 
$\Nklt(W^\nu, \Delta)=\nu^{-1}E$. 
By the Nadel vanishing theorem (see \cite[Theorem 3.4.2]{fujino-foundations}), 
we have $H^1(W^\nu, \mathcal J(W^\nu, \Delta))=0$, 
where $\mathcal J(W^\nu, \Delta)$ is the multiplier ideal 
sheaf of 
$(W^\nu, \Delta)$, since $-(K_{W^\nu}+\Delta)$ is ample. 
This implies that $\Nklt(W^\nu, \Delta)=\nu^{-1}E$ is connected. 
\end{proof}

By the standard argument 
in the recent developments of the 
theory of higher-dimensional minimal models, 
we have the following lemma. 

\begin{lem}\label{a-lem5.2} 
Let $V$ be a normal projective variety and let $\Delta$ be a boundary 
$\mathbb R$-divisor on $V$ such that $-(K_V+\Delta)$ is ample. 
Then we can take a boundary $\mathbb Q$-divisor 
$\Delta'$ on $V$ such that 
$-(K_V+\Delta')$ is ample and 
that the equality $\mathcal J(V, \Delta')=\mathcal J(V, \Delta)$ holds, 
where $\mathcal J(V, \Delta)$ {\em{(}}resp.~$\mathcal J(V, \Delta')${\em{)}} 
is the multiplier ideal sheaf of 
$(V, \Delta)$ {\em{(}}resp.~$(V, \Delta')${\em{)}}. 
Moreover, we can choose $\Delta'$ such that 
$\mult _P\Delta'=\mult _P\Delta$ holds for any prime divisor $P$ on $V$ with 
$\mult _P \Delta\in \mathbb Q$. 
\end{lem}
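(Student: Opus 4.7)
The plan is to perturb only the irrational coefficients of $\Delta$ upward by small rational amounts, keeping every rational coefficient fixed. The point of moving strictly in the positive direction is that on a log resolution the pullback of every prime component of $\Delta$ is effective, so each coefficient on the resolution can only increase under such a perturbation, and this makes it easy to control the round-down that computes the multiplier ideal.

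Concretely, I would first fix a log resolution $f\colon Y\to V$ of $(V,\Delta)$ with $K_Y+\Delta_Y=f^*(K_V+\Delta)$ and $\Supp\Delta_Y$ simple normal crossing. Writing $\Delta=\sum_i d_iD_i$, let $I$ denote the set of indices with $d_i\notin\mathbb Q$; since $\Delta$ is a boundary, each such $d_i$ lies in $(0,1)$. For every $i\in I$ I would then choose $d_i'\in\mathbb Q$ with $d_i\le d_i'<d_i+\delta$ for some small $\delta>0$ to be fixed later, set $d_i':=d_i$ for $i\notin I$, and put $\Delta':=\sum_i d_i'D_i$. By construction $\mult_P\Delta'=\mult_P\Delta$ at every prime $P$ with $\mult_P\Delta\in\mathbb Q$, and $\Delta'$ is a boundary $\mathbb Q$-divisor provided $\delta$ is small. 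Ampleness of $-(K_V+\Delta')$ then follows from the openness of the ample cone in $N^1(V)_{\mathbb R}$ together with $\Delta'-\Delta\to 0$ as $\delta\to 0$.

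The substantive step, which I expect to be the main obstacle, is the equality $\mathcal J(V,\Delta')=\mathcal J(V,\Delta)$. Since $f$ is simultaneously a log resolution of $(V,\Delta')$, we have $\Delta_Y'-\Delta_Y=f^*(\Delta'-\Delta)=\sum_{i\in I}(d_i'-d_i)f^*D_i\ge 0$, so writing $\Delta_Y=\sum_k c_kE_k$ and $\Delta_Y'=\sum_k c_k'E_k$ we obtain $0\le c_k'-c_k$, uniformly small for $\delta\ll 1$. I need $\lfloor c_k'\rfloor=\lfloor c_k\rfloor$ for every $k$: when $c_k\notin\mathbb Z$ this holds once $c_k'-c_k<\lceil c_k\rceil-c_k$, and when $c_k\in\mathbb Z$ the inequality $c_k\le c_k'<c_k+1$ already forces $\lfloor c_k'\rfloor=c_k$. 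Since only finitely many $E_k$ occur, a single sufficiently small $\delta$ handles them all. This is precisely why the perturbation must be one-sided: a two-sided perturbation could drop an integer $c_k$ to $c_k-\varepsilon$ and thereby enlarge the multiplier ideal.
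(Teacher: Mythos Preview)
Your argument is in the spirit of the paper's (which merely says ``by slightly perturbing the coefficients'' and leaves the details as an exercise), and it is considerably more detailed, but it has a genuine gap when $V$ is not $\mathbb Q$-factorial. You perturb the irrational coefficients $d_i$ independently, but nothing then guarantees that $K_V+\Delta'$ remains $\mathbb R$-Cartier: the difference $\Delta'-\Delta=\sum_{i\in I}(d_i'-d_i)D_i$ need not be $\mathbb R$-Cartier if the individual $D_i$ are not $\mathbb Q$-Cartier. Without this, the symbols $f^*(\Delta'-\Delta)$ and $f^*D_i$ are undefined, the multiplier ideal $\mathcal J(V,\Delta')$ is not even defined (Definition~\ref{p-def2.3} requires $K_V+\Delta'$ to be $\mathbb R$-Cartier), and the appeal to openness of the ample cone in $N^1(V)_{\mathbb R}$ makes no sense. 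The hypothesis of the lemma is only that $V$ is normal, and in the intended application (Lemma~\ref{a-lem5.1}) $V=W^\nu$ is the normalization of a qlc stratum, which has no reason to be $\mathbb Q$-factorial.

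The standard repair is to perturb inside the rational affine subspace
\[
W=\Bigl\{(t_i)\in\mathbb R^n:\ K_V+\sum_i t_iD_i\ \text{is $\mathbb R$-Cartier}\Bigr\},
\]
which contains $(d_i)$ and therefore has dense rational points; intersecting $W$ with the rational hyperplanes $t_i=d_i$ for $i\notin I$ keeps the rational coefficients fixed. Your one-sided device, however, does not directly survive this restriction, since within $W$ the $t_i$ cannot in general be moved independently upward. Instead, note that on $W$ each $c_k$ is a rational affine function of the $t_i$; whenever $c_k(d)\in\mathbb Z$ one may impose the further rational linear constraint $c_k(t)=c_k(d)$. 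The resulting smaller rational affine subspace still contains $(d_i)$, hence still has nearby rational points $(d_i')$. For any such point, $K_V+\Delta'$ is $\mathbb Q$-Cartier, $-(K_V+\Delta')$ is ample by openness, $\lfloor\Delta_Y'\rfloor=\lfloor\Delta_Y\rfloor$ (the non-integral $c_k$ are handled by smallness, the integral ones by the added constraints), and the multiplicities at the rational-coefficient components are unchanged.
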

\begin{proof}
By slightly perturbing the coefficients of $\Delta$, 
we get a boundary $\mathbb Q$-divisor $\Delta'$ with 
the desired properties. 
We leave the details as an exercise for the reader. 
\end{proof}

Since many results were formulated and stated only for 
$\mathbb Q$-divisors in the literature, Lemma \ref{a-lem5.2} 
is useful and helpful. 

\medskip 

By Lemmas \ref{a-lem5.1} and \ref{a-lem5.2}, 
the proof of \cite[Corollary 2.5 and Theorem 2.7]{fujino-wenfei} 
works with some minor modifications. 

\begin{proof}[Sketch of Proof of Theorems \ref{a-thm1.6} and \ref{a-thm1.7}] 
Let $W_0$ be the unique minimal qlc stratum of $[X, \omega]$ 
(see Proposition \ref{p-prop2.11} (v)). 
Then we can take a boundary $\mathbb Q$-divisor 
$\Delta_0$ on $W_0$ such that 
$(W_0, \Delta_0)$ is kawamata log terminal and 
$-(K_{W_0}+\Delta_0)$ is ample by 
Lemmas \ref{a-lem5.1} and \ref{a-lem5.2}. 
Thus it is well known that $W_0$ is rationally (chain) connected 
and simply connected (see, for example, 
\cite[Corollary 2.4]{fujino-wenfei}). 

Let $W$ be any qlc stratum of $[X, \omega]$. 
By Lemmas \ref{a-lem5.1} and \ref{a-lem5.2}, 
the proof of \cite[Corollary 2.5]{fujino-wenfei} works 
with some minor modifications. 
Therefore, we obtain that $X$ is rationally chain connected. 
Hence we get Theorem \ref{a-thm1.7}. 

By Lemmas \ref{a-lem5.1} and \ref{a-lem5.2} again, 
we can easily see that the proof of \cite[Theorem 2.7]{fujino-wenfei} 
works with some minor changes. 
Hence we see that $X$ is simply connected. 
This is Theorem \ref{a-thm1.6}. 
\end{proof}

\section{Lengths of extremal rational 
curves for qlc pairs}\label{a-sec6}
In this section, we prove Theorem \ref{a-thm1.8}, 
which is a generalization of \cite[Theorem 1]{kawamata1}. 
Our proof of Theorem \ref{a-thm1.8} below heavily depends on 
\cite[Theorem 4.6.7]{fujino-foundations}. 

\medskip 

Let us start the proof of Theorem \ref{a-thm1.8}. 
Although we will treat a more general result 
in Section \ref{v-sec7}, 
the proof of Theorem \ref{a-thm1.8} plays a crucial 
role. 

\begin{proof}[Proof of Theorem \ref{a-thm1.8}]
Let $\varphi_{R_j}: X\to Y$ be the extremal contraction 
associated to $R_j$ (see 
\cite[Theorems 6.7.3 and 6.7.4]{fujino-foundations}). 
By replacing $\pi:X\to S$ with $\varphi_{R_j}: X\to Y$, we may 
assume that 
$-\omega$ is $\pi$-ample. 
We take a qlc stratum $W$ of $[X, \omega]$ such that 
$\pi:\Nqklt(W, \omega|_W)\to \pi\left(\Nqklt(W, \omega|_W)\right)$ is 
finite and 
that $\pi:W\to \pi(W)$ is not finite. 
It is sufficient to find a rational curve $C$ on $W$ such that 
$\pi(C)$ is a point 
and that $0<-(\omega|_W)\cdot C\leq 2\dim W\leq 2\dim X$. 
Therefore, by replacing $\pi:X\to S$ with $\pi:W\to S$, 
we may assume that $X$ is irreducible and that $\pi:
\Nqklt(X, \omega)\to \pi\left(\Nqklt(X, \omega)\right)$ is finite. 
Let $\nu:X^\nu\to X$ be the normalization. 
Then, by \cite[Theorem 1.1]{fujino-haidong}, 
$[X^\nu, \nu^*\omega]$ naturally becomes a 
quasi-log canonical 
pair with $\Nqklt(X^\nu, \nu^*\omega)=\nu^{-1}
\Nqklt(X, \omega)$. 
Therefore, by replacing $\pi:X\to S$ with 
$\pi\circ \nu: X^\nu\to S$, we may 
assume that $X$ is a normal 
variety such that 
$\pi:\Nqklt(X, \omega)\to \pi\left(\Nqklt(X, \omega)\right)$ 
is finite. 
In this situation, 
all we have to do is to find a rational curve $C$ on $X$ such that 
$\pi(C)$ is a point and that 
$0<-\omega\cdot C\leq 2\dim X$. 
Without loss of generality, we may assume that 
$X$ and $S$ are quasi-projective by shrinking $S$ 
suitably. 
Let $H$ be an ample Cartier divisor on $X$. 
By Theorem \ref{a-thm1.1}, we can 
construct a boundary $\mathbb R$-divisor 
$\Delta_{\varepsilon}$ on $X$ such that 
$K_X+\Delta_{\varepsilon}\sim _{\mathbb R} \omega+\varepsilon H$ and 
that $\Nklt(X, \Delta_{\varepsilon})=\Nqklt(X, \omega)$ for 
every positive real number $\varepsilon$. 
Note that $\Nlc(X, \Delta_{\varepsilon}) \subset 
\Nklt(X, 
\Delta_{\varepsilon}) =\Nqklt(X, \omega)$, 
where $\Nlc(X, \Delta_{\varepsilon})$ denotes the non-lc locus 
of $(X, \Delta_{\varepsilon})$ as in Definition \ref{p-def2.3}. 
Therefore, $\pi:\Nlc (X, 
\Delta_{\varepsilon})\to \pi\left(\Nlc(X, \Delta_{\varepsilon})
\right)$ is finite. We assume that 
$\varepsilon$ is sufficiently small 
such that 
$-(\omega+\varepsilon H)$ is 
$\pi$-ample. Then, by the cone theorem 
for $(X, \Delta_{\varepsilon})$, we can 
find a rational curve $C_{\varepsilon}$ on $X$ such that 
$\pi(C_{\varepsilon})$ is a point and 
that $0<-(K_X+\Delta_{\varepsilon})\cdot 
C_{\varepsilon}\leq 2\dim X$ (see 
\cite[Theorem 1.1]{fujino-fund} and 
\cite[Theorem 4.6.7]{fujino-foundations}). 
We take an ample $\mathbb Q$-divisor $A$ 
on $X$ such that $-(\omega+A)$ is $\pi$-ample. 
We take $\{\varepsilon_i\}_{i=0}^\infty$ such that
$\lim _{i\to 0} \varepsilon _i=0$, $\varepsilon_i$ is a 
positive real number, 
and $-(\omega+A+\varepsilon _i H)$ is $\pi$-ample for every $i$. 
As we saw above, we can take a rational curve $C_i$ on $X$ such that 
$\pi(C_i)$ is a point and that 
$0<-(\omega+\varepsilon _i H)\cdot C_i\leq 2\dim X$ for every $i$. 
Note that 
$$
0<A\cdot C_i =\left((\omega+\varepsilon _i H+A)-(\omega+\varepsilon _iH)
\right) \cdot C_i <2\dim X. 
$$ 
It follows that the curves $C_i$ belong to a bounded family. 
Thus, possibly passing to a subsequence, we may assume that 
$C_i=C$ is constant. 
Therefore, we get 
$$
0<-\omega\cdot C=\lim _{i\to \infty} -(\omega+\varepsilon_i H)\cdot 
C=\lim _{i\to \infty} -(\omega+\varepsilon_i H)\cdot 
C_i  \leq 2\dim X. 
$$ 
This is what we wanted. 
\end{proof}

\begin{rem}\label{a-rem6.1}
We expect that the estimate $\leq 2\dim X$ should be 
replaced by $\leq \dim X+1$ in Theorem \ref{a-thm1.8} 
(see \cite[Remark 4.6.3]{fujino-foundations}). 
\end{rem}

We give a remark on the proof of \cite[Theorem 4.6.7]{fujino-foundations}, 
which was used in the proof of Theorem \ref{a-thm1.8} above. 

\begin{rem}\label{a-rem6.2}
In the proof of \cite[Theorem 4.6.7]{fujino-foundations}, 
the author claims that $\pi$ is an 
isomorphism in a neighborhood of $\Nlc(X, \Delta)$ by replacing $\pi:X\to 
S$ with the extremal contraction $\varphi_R: X\to Y$ over $S$. 
However, it is not correct. 
In general, $\pi$ is 
not necessarily an isomorphism 
around $\Nlc (X, \Delta)$ (see Example \ref{a-ex6.3} below). By 
replacing $\pi:X\to S$ with $\varphi_R: X\to Y$, we can assume that 
$\pi: \Nlc (X, \Delta)\to \pi\left(\Nlc (X, \Delta)\right)$ is finite.  
Note that all we need in the proof of \cite[Theorem 4.6.7]{fujino-foundations} 
is the fact that $\pi$ contracts no curves in $\Nlc (X, \Delta)$. 
Therefore, the proof of \cite[Theorem 4.6.7]{fujino-foundations} 
works without any modifications. 
\end{rem}

\begin{ex}\label{a-ex6.3} 
We put $X=\mathbb P^1$, $\pi: X\to S=\Spec \mathbb C$, 
and $\Delta=\frac{3}{2} P$, where $P$ is a point of $X=\mathbb P^1$. 
Then $-(K_X+\Delta)$ is $\pi$-ample and $\rho(X/S)=1$. 
Of course, $\pi$ is not an isomorphism 
around $P=\Nlc (X, \Delta)$. 
\end{ex}

We close this section with an important remark. 

\begin{rem}\label{a-rem6.4}
The proof of \cite[Theorem 4.6.7]{fujino-foundations} 
needs Mori's bend and break technique to 
create rational curves (see \cite[Remark 4.6.4]{fujino-foundations}). 
Therefore, we need the mod $p$ reduction technique for the proof of 
Theorem 
\ref{a-thm1.8}. 
We note that we take a dlt blow-up (see 
\cite[Theorem 4.4.21]{fujino-foundations}) 
in the proof of \cite[Theorem 4.6.7]{fujino-foundations}. 
This means that Theorem \ref{a-thm1.8} depends on 
the minimal model program mainly due to \cite{bchm}. 
\end{rem}

\section{Mori hyperbolicity for quasi-log canonical pairs}\label{v-sec7}

In this final section, we generalize the main result of \cite{svaldi} 
for quasi-log canonical pairs. 
We note that in \cite{svaldi} 
the notion of {\em{crepant log structures}}, 
which is a very special case of that of 
quasi-log schemes, plays a 
crucial role. On the other hand, we can directly 
treat highly singular reducible schemes by the framework of 
quasi-log schemes (see 
\cite[Chapter 6]{fujino-foundations}) 
and basic slc-trivial fibrations (see \cite{fujino-slc-trivial}). 
This is the main difference between \cite{svaldi} and 
our approach here. 

\medskip 

Let us start with the following key result. 

\begin{prop}[{\cite[Proposition 5.2]{svaldi}}]\label{v-prop7.1}
Let $\pi:X\to S$ be a projective morphism from a normal 
$\mathbb Q$-factorial 
variety $X$ onto a scheme $S$. Let $\Delta=\sum _i d_i D_i$ be an effective 
$\mathbb R$-divisor on $X$, where the $D_i$'s are the distinct 
prime components of $\Delta$ for all $i$, such that 
$$\left(X, \Delta':=\sum _{d_i<1}d_iD_i+\sum _{d_i\geq 1} D_i\right)$$ is dlt. 
Assume that 
$(K_X+\Delta)|_{\Nklt(X, \Delta)}$ is nef over $S$. 
Then $K_X+\Delta$ is nef over $S$ or 
there exists a non-constant morphism 
$f:\mathbb A^1\to X\setminus \Nklt(X, \Delta)$ such that 
$\pi\circ f(\mathbb A^1)$ is a point. 

More precisely, the curve $C$, the closure of $f(\mathbb A^1)$ in 
$X$, is a {\em{(}}possibly singular{\em{)}} rational curve 
with $$
0<-(K_X+\Delta)\cdot C\leq 2\dim X. 
$$
\end{prop}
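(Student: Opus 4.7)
The plan is to combine the cone theorem and Kawamata's length bound for extremal rays for dlt pairs with Mori's bend-and-break, using the nef hypothesis along $\Nklt(X,\Delta)$ to steer the extremal rational curve into the open complement.

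Suppose $K_X+\Delta$ is not nef over $S$. First I would extract a $(K_X+\Delta)$-negative extremal ray $R\subset \overline{NE}(X/S)$ by applying the cone theorem for dlt pairs to $(X,\Delta')$ (noting that $(K_X+\Delta)$-negativity on a curve is controlled by both $K_X+\Delta'$ and the effective correction $\Delta-\Delta'=\sum_{d_i\geq 1}(d_i-1)D_i$, which is supported on $\Supp \Delta^{=1}\subset \Nklt(X,\Delta)$). Kawamata's length estimate then supplies an integral rational curve $C$ with $[C]\in R$, contracted by $\pi$, and satisfying $0<-(K_X+\Delta)\cdot C\leq 2\dim X$.

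Next, since $(K_X+\Delta)|_{\Nklt(X,\Delta)}$ is nef over $S$ and $\pi(C)$ is a point, the intersection $(K_X+\Delta)\cdot C$ would be $\geq 0$ if $C$ were contained in $\Nklt(X,\Delta)$, contradicting $(K_X+\Delta)\cdot C<0$. Hence $C\cap \Nklt(X,\Delta)$ is a finite set of closed points, and it remains to arrange that this set has at most one element: removing it (if present) from the normalization $\widetilde C\cong \mathbb P^1$ then produces the required morphism $f:\mathbb A^1\to X\setminus \Nklt(X,\Delta)$ whose closure in $X$ is $C$.

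The main obstacle is reducing $|C\cap \Nklt(X,\Delta)|$ to at most one. If this intersection contained two distinct points $p_1,p_2$, I would invoke Mori's bend-and-break on the Hom-scheme of maps $\mathbb P^1\to X$ pinned over $p_1,p_2$, passing to positive characteristic in the style of the proof of Theorem \ref{a-thm1.8} and \cite[Theorem 4.6.7]{fujino-foundations}. The subtlety is that the bound $-(K_X+\Delta)\cdot C\leq 2\dim X$ does not directly deliver the sharper inequality $-K_X\cdot C\geq \dim X+1$ needed for a nontrivial deformation pinning two points; the saving observation is that the two pinned points lie in $\Supp \Delta^{=1}$ and contribute strictly to $\Delta\cdot C$, which boosts $-K_X\cdot C=-(K_X+\Delta)\cdot C+\Delta\cdot C$ past the bend-and-break threshold. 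A suitable Hom-scheme dimension count then yields a degeneration whose limit decomposes $C$ into extremal rational subcurves of strictly smaller $(K_X+\Delta)$-degree, each still passing through $\Nklt(X,\Delta)$; choosing $C$ of minimal $(K_X+\Delta)$-degree in $R$ among extremal rational curves meeting $\Nklt(X,\Delta)$ gives the desired contradiction and completes the construction.
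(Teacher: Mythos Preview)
Your reduction to finding an extremal rational curve $C$ meeting $\Nklt(X,\Delta)$ in at most one point is correct, and so is the observation that $C\not\subset\Nklt(X,\Delta)$. The gap is in the bend-and-break step you propose to force $|C\cap\Nklt(X,\Delta)|\leq 1$.

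The claimed ``boost'' of $-K_X\cdot C$ past the threshold $\dim X+1$ does not follow. You only know $-(K_X+\Delta)\cdot C>0$, which can be arbitrarily small, and on a merely $\mathbb Q$-factorial variety the local intersection numbers of $C$ with the components of $\Delta^{\geq 1}$ at $p_1,p_2$ can be tiny positive rationals, so $\Delta\cdot C$ need not be close to $2$; moreover if $C\subset\Supp\Delta^{<1}$ then $\Delta^{<1}\cdot C$ can be negative and may even make $-K_X\cdot C\leq 0$, killing the covering trick as well. Independently, the Hom-scheme dimension estimate underlying bend-and-break with two marked points requires $X$ to be smooth (or at least lci) along the image, which is not part of the hypotheses. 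Finally, even if the curve does break, the minimality argument is incomplete: a component through $p_1$ may meet $\Nklt(X,\Delta)$ at new points distinct from $p_1,p_2$, so the number of intersection points need not drop.

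The paper avoids all of this. After passing to the extremal contraction $\varphi=\varphi_R:X\to Y$, one observes that $(K_X+\Delta^{<1})\cdot R<0$ and $(K_X+\Delta')\cdot R<0$ (since $C\not\subset\Supp\Delta^{\geq 1}$ forces $\Delta^{\geq 1}\cdot C\geq 0$), and then Kawamata--Viehweg vanishing gives $R^1\varphi_*\mathcal O_X=R^1\varphi_*\mathcal O_X(-\lfloor\Delta'\rfloor)=0$. The resulting surjection $\mathcal O_Y\twoheadrightarrow\varphi_*\mathcal O_{\lfloor\Delta'\rfloor}$ shows that $\Supp\Delta^{\geq 1}=\Nklt(X,\Delta)$ is connected near every fiber of $\varphi$. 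Since $\varphi|_{\Nklt(X,\Delta)}$ is finite (by the nefness hypothesis), connectedness forces each fiber of $\varphi$ to meet $\Nklt(X,\Delta)$ in at most one point. A short case analysis (Fano contraction, birational with $\Exc(\varphi)$ disjoint from $\Nklt$, birational with $\Exc(\varphi)$ meeting $\Nklt$) then produces the required $\mathbb P^1$ in a fiber meeting $\Nklt(X,\Delta)$ at most once, and the length bound comes from the usual cone theorem. Replacing your bend-and-break paragraph with this connectedness argument closes the gap.
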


This is one of the most important results of \cite{svaldi}. 
We give a detailed proof for the sake of completeness. 

\begin{proof}[Proof of Proposition \ref{v-prop7.1}]
Note that $\Nklt(X, \Delta)$ coincides with $(\Delta')^{=1}=\lfloor 
\Delta'\rfloor$, 
$\Delta^{\geq 1}$, and $\lfloor \Delta\rfloor$ set theoretically 
because $(X, \Delta')$ is dlt by assumption. 
It is sufficient to construct a non-constant morphism 
$f:\mathbb A^1\to X\setminus \Nklt(X, \Delta)$ such that 
$\pi\circ f(\mathbb A^1)$ is a point with the desired 
properties when 
$K_X+\Delta$ is not nef over $S$. 
By shrinking $S$ suitably, we may assume that $S$ and $X$ are 
both quasi-projective. 
By the cone and contraction theorem (see \cite[Theorem 
1.1]{fujino-fund}), we can take a $(K_X+\Delta)$-negative 
extremal ray $R$ of $\overline {NE}(X/S)$ and the 
associated extremal contraction morphism 
$\varphi:=\varphi_R:X\to Y$ over $S$ since 
$(K_X+\Delta)|_{\Nklt(X, \Delta)}$ is nef over $S$. 
Note that $(K_X+\Delta^{<1})\cdot R<0$ and 
$(K_X+\Delta')\cdot R<0$ hold because 
$(K_X+\Delta)|_{\Nklt(X, \Delta)}$ is nef over $S$. 
Since $(X, \Delta^{<1})$ is kawamata log terminal and $-(K_X+\Delta^{<1})$ is 
$\varphi$-ample, 
we get $R^i\varphi_*\mathcal O_X=0$ for every $i>0$ 
by the relative Kawamata--Viehweg vanishing theorem 
(see \cite[Corollary 5.7.7]{fujino-foundations}). 
By construction, $\varphi: \Nklt(X, \Delta)\to \varphi(\Nklt(X, \Delta))$ is 
finite. 
We have the following 
short exact sequence  
$$
0\to \mathcal O_X(-\lfloor \Delta'\rfloor)\to 
\mathcal O_X\to \mathcal O_{\lfloor \Delta'\rfloor}\to 0. 
$$ 
Since $-\lfloor \Delta'\rfloor -(K_X+\{\Delta'\})=-(K_X+\Delta')$ 
is $\varphi$-ample and $(X, \{\Delta'\})$ is kawamata log terminal, 
$R^i\varphi_*\mathcal O_X(-\lfloor \Delta'\rfloor)=0$ holds 
for every $i>0$ by the 
relative Kawamata--Viehweg vanishing theorem again (see 
\cite[Corollary 5.7.7]{fujino-foundations}).   
Therefore, 
$$
0\to \varphi_*\mathcal O_X(-\lfloor \Delta'\rfloor)\to 
\mathcal O_Y\to \varphi_*\mathcal O_{\lfloor \Delta'\rfloor}\to 0
$$ 
is exact. 
This implies that 
$\Supp \lfloor \Delta'\rfloor=\Supp \Delta^{\geq 1}$ is connected 
in a neighborhood of any fiber of $\varphi$. 
\begin{case}\label{v-7.1case1}
Assume that $\varphi$ is a Fano contraction, that is, 
$\dim Y<\dim X$. 
Then we see that $\Delta^{\geq 1}$ is $\varphi$-ample 
and that $\dim Y=\dim X-1$. Note that 
$\Supp \Delta^{\geq 1}$ is finite over $Y$. 
In this situation, we can easily see that 
every general fiber is $\mathbb P^1$ 
by $R^1\varphi_*\mathcal O_X=0$. 
Moreover, any general fiber intersects 
$\Delta^{\geq 1}$ in at most one point 
by the connectedness of $\Supp \Delta^{\geq 1}$ discussed 
above. 
Therefore, we can find a non-constant morphism 
$f:\mathbb A^1\to X\setminus \Nklt(X, \Delta)$ such that 
$\pi\circ f(\mathbb A^1)$ is a point and that 
$0<-(K_X+\Delta)\cdot C\leq 2\dim X$ holds, 
where $C$ is the closure of $f(\mathbb A^1)$ in $X$. 
\end{case}
\begin{case}\label{v-7.1case2}
Assume that $\varphi$ is a birational contraction and that 
the exceptional locus $\Exc(\varphi)$ of $\varphi$ 
is disjoint from $\Nklt(X, \Delta)$. 
In this situation, we can find a rational curve $C$ in a fiber of $\varphi$ 
with $0<-(K_X+\Delta)\cdot C\leq 2\dim X$ by the 
cone theorem (see \cite[Theorem 1.1]{fujino-fund}). 
It is obviously disjoint from $\Nklt(X, \Delta)$. 
Therefore, we can take a non-constant morphism 
$f:\mathbb A^1\to X\setminus \Nklt(X, \Delta)$ 
such that the closure of $f(\mathbb A^1)$ is $C$. 
\end{case}
\begin{case}\label{v-7.1case3}
Assume that $\varphi$ is a birational 
contraction and that 
$\Exc(\varphi)\cap \Nklt(X, \Delta)\ne \emptyset$. 
In this situation, as in Case \ref{v-7.1case1}, 
we see that $\Delta^{\geq 1}$ is $\varphi$-ample and 
that $\dim \varphi^{-1}(y)\leq 1$ for every $y\in Y$. 
By taking a complete intersection of general hypersurfaces 
of $Y$ and its inverse image, we can reduce the problem to the case 
where $\varphi(\Exc(\varphi))=:P$ is a point. 
Then $R^1\varphi_*\mathcal O_X=0$ implies that 
every irreducible component of $\varphi^{-1}(P)$ 
is $\mathbb P^1$. 
We take any irreducible component $C$ of $\varphi^{-1}(P)$. 
By the connectedness of $\Supp \Delta^{\geq 1}$ discussed 
above, $C$ intersects $\Delta^{\geq 1}$ in at most one point. 
Therefore, we can get a non-constant morphism 
$f:\mathbb A^1\to X\setminus \Nklt(X, \Delta)$ such that 
$f(\mathbb A^1)\subset C\cap (X\setminus \Nklt(X, \Delta))$. 
By applying the cone theorem (see 
\cite[Theorem 1.1]{fujino-fund}) to $\varphi:X\to Y$, 
we may assume that $0<-(K_X+\Delta)\cdot C\leq 2\dim X$. 
\end{case} 
Therefore, we get the desired statement. 
\end{proof}

Let us recall the following useful lemma, which is a kind of dlt blow-ups. 
Here we need the minimal model theory mainly due to 
\cite{bchm}. 

\begin{lem}[{\cite[Theorem 3.4]{svaldi}}]\label{v-lem7.2}
Let $X$ be a normal quasi-projective variety and 
let $\Delta$ be a boundary 
$\mathbb R$-divisor on $X$ such that 
$K_X+\Delta$ is $\mathbb R$-Cartier. 
Then we can construct a projective 
birational morphism 
$g:Y\to X$ from a normal $\mathbb Q$-factorial variety 
$Y$ with the following properties. 
\begin{itemize}
\item[(i)] $K_Y+\Delta_Y:=g^*(K_X+\Delta)$, 
\item[(ii)] the pair $$\left(Y, \Delta'_Y:=\sum _{d_i <1}d_i D_i+
\sum _{d_i\geq 1}D_i\right)$$ 
is dlt, where $\Delta_Y=\sum _i d_i D_i$ is 
the irreducible decomposition of 
$\Delta_Y$, 
\item[(iii)] every $g$-exceptional prime divisor 
is a component of $(\Delta'_Y)^{=1}$, and 
\item[(iv)] $g^{-1}\Nklt(X, \Delta)$ coincides with 
$\Nklt(Y, \Delta_Y)$ and $\Nklt(Y, \Delta'_Y)$ set theoretically. 
\end{itemize}
\end{lem}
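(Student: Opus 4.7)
The plan is to invoke the now-standard dlt modification construction, obtained as an application of \cite{bchm}.

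I would first take a log resolution $f:Z\to X$ of $(X,\Delta)$, so $Z$ is smooth quasi-projective and $\Exc(f)\cup\Supp f^{-1}_*\Delta$ has simple normal crossing support. Writing $K_Z+\Gamma_Z=f^*(K_X+\Delta)$ with
$$\Gamma_Z \;=\; f^{-1}_*\Delta-\sum_{E\,f\text{-exceptional}} a(E,X,\Delta)\,E,$$
I would split the $f$-exceptional prime divisors into those $E_j$ with $a(E_j,X,\Delta)>-1$ (to be contracted) and those $E'_k$ with $a(E'_k,X,\Delta)\leq -1$ (to be kept). Set
$$\Delta_Z \;:=\; f^{-1}_*\Delta \;+\; \sum_j E_j \;+\; \sum_k E'_k,$$
so that $\Delta_Z$ is a boundary, $(Z,\Delta_Z)$ is log smooth (hence dlt), and $K_Z+\Delta_Z = f^*(K_X+\Delta)+G$, where $G$ is $f$-exceptional with strictly positive coefficient $1+a(E_j,X,\Delta)$ along each $E_j$ and nonpositive coefficient $1+a(E'_k,X,\Delta)$ along each $E'_k$; in particular $G^+=\sum_j(1+a(E_j,X,\Delta))E_j$ is supported exactly on the divisors we wish to contract.

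Next I would run a relative $(K_Z+\Delta_Z)$-MMP over $X$ with scaling of an $f$-ample divisor. Termination follows from \cite{bchm}, yielding a projective birational morphism $g:Y\to X$ from a $\mathbb Q$-factorial normal variety $Y$. Since $K_Z+\Delta_Z-G=f^*(K_X+\Delta)$ is $f$-trivial and $G$ is $f$-exceptional, the negativity lemma forces the effective divisor $G^+$ to be contracted in the course of the MMP while each $E'_k$ is preserved. Defining $\Delta_Y$ by $K_Y+\Delta_Y=g^*(K_X+\Delta)$ gives (i). The birational transform of $\Delta_Z$ to $Y$ (with the contracted $E_j$ removed) equals $g^{-1}_*\Delta + \sum_k\widetilde{E'_k}$, which one checks is exactly $\Delta'_Y$; since $(Y,\Delta'_Y)$ inherits dltness from $(Z,\Delta_Z)$ along the steps of the $(K_Z+\Delta_Z)$-MMP, (ii) follows. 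The $g$-exceptional prime divisors are precisely the $\widetilde{E'_k}$, each carrying coefficient $-a(E'_k,X,\Delta)\geq 1$ in $\Delta_Y$, yielding (iii). For (iv), using $Z$ as a common log resolution of $(X,\Delta)$ and $(Y,\Delta_Y)$, the multiplier-ideal formulas of Definition \ref{p-def2.3} give $\Nklt(Y,\Delta_Y)$ set-theoretically as the image of $\Supp\Gamma_Z^{\geq 1}$ on $Y$, which equals $g^{-1}\Nklt(X,\Delta)$; the same identification holds for $\Nklt(Y,\Delta'_Y)$ since $\Delta_Y$ and $\Delta'_Y$ agree where coefficients are below $1$ and share the same support of coefficients $\geq 1$.

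The main obstacle is justifying termination of the MMP while identifying the $E_j$ as precisely the contracted divisors. Termination reduces to a relative MMP with scaling on a $\mathbb Q$-factorial dlt pair and is handled by \cite{bchm}; the identification of the contracted locus as $\Supp G^+$ is the standard application of the negativity lemma to the $f$-exceptional divisor $G$, using that the pushforward of $K_Z+\Delta_Z$ on the MMP output must be $g$-nef while differing from $g^*(K_X+\Delta)$ by a $g$-exceptional divisor. The remaining verifications are formal manipulations with the discrepancy formulas.
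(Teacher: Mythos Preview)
Your construction is the standard dlt blow-up, and it does correctly yield (i), (ii), and (iii); indeed the paper itself begins its proof by invoking exactly this construction (citing \cite[Theorem 4.4.21]{fujino-foundations}) to obtain a model satisfying (i)--(iii). The genuine gap is in your argument for (iv).

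First a technical point: after running the $(K_Z+\Delta_Z)$-MMP over $X$ the map $Z\dashrightarrow Y$ is only birational (flips may occur), so $Z$ is not in general a common log resolution of $X$ and $Y$. But even after replacing $Z$ by a genuine common log resolution $p\colon W\to Y$, $q\colon W\to X$ with $q=g\circ p$, your reasoning only gives the easy inclusion
\[
\Nklt(Y,\Delta_Y)=p\bigl(\Supp\Gamma_W^{\geq 1}\bigr)\ \subset\ g^{-1}q\bigl(\Supp\Gamma_W^{\geq 1}\bigr)=g^{-1}\Nklt(X,\Delta).
\]
The reverse inclusion is not automatic: a point $y\in Y$ with $g(y)\in\Nklt(X,\Delta)$ need not lie in $p(\Supp\Gamma_W^{\geq 1})$. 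Concretely, for a bare dlt blow-up the fiber $g^{-1}(x)$ over a non-klt point $x$ can contain curves not lying in $\Supp\Delta_Y^{\geq 1}$, so (iv) can fail.

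This is precisely why the paper, following \cite{svaldi}, does more. Starting from the dlt blow-up $\alpha\colon Z\to X$ you construct, one runs a \emph{different} MMP---a $(K_Z+\Delta_Z^{<1})$-MMP over $X$ on the klt pair $(Z,\Delta_Z^{<1})$---to reach a model $\alpha'\colon Z'\to X$ on which $K_{Z'}+\Delta_{Z'}^{<1}\sim_{\mathbb R}-\Delta_{Z'}^{\geq 1}+\alpha'^*(K_X+\Delta)$ is nef over $X$; equivalently $-\Delta_{Z'}^{\geq 1}$ is $\alpha'$-nef. A further dlt blow-up $\beta\colon Y\to Z'$ of $(Z',\Delta_{Z'}^{<1}+\Supp\Delta_{Z'}^{\geq 1})$ then gives $g:=\alpha'\circ\beta$. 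The relative nefness of $-\Delta_{Z'}^{\geq 1}$ is the key input for (iv): if some curve in a fiber of $g$ over $\Nklt(X,\Delta)$ met $\Supp\beta^*\Delta_{Z'}^{\geq 1}$ without being contained in it, it would intersect $-\beta^*\Delta_{Z'}^{\geq 1}$ negatively, contradicting nefness; combined with connectedness of fibers and $g(\Supp\beta^*\Delta_{Z'}^{\geq 1})=\Nklt(X,\Delta)$ this forces $\Supp\beta^*\Delta_{Z'}^{\geq 1}=g^{-1}\Nklt(X,\Delta)$. Your single $(K_Z+\Delta_Z)$-MMP does not arrange this nefness, so (iv) cannot be concluded as you wrote.
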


\begin{proof}[Sketch of Proof of Lemma \ref{v-lem7.2}]
It is well known that there exists a dlt blow-up $\alpha:Z\to X$ with 
$K_Z+\Delta_Z:=\alpha^*(K_X+\Delta)$ satisfying 
(i), (ii), and (iii) (see \cite[Theorem 4.4.21]{fujino-foundations}). 
Note that $(Z, \Delta^{<1}_Z)$ is a $\mathbb Q$-factorial 
kawamata log terminal pair. 
We take a minimal model $(Z', \Delta^{<1}_{Z'})$ of 
$(Z, \Delta^{<1}_Z)$ over $X$ by \cite{bchm}. 
$$
\xymatrix{
Z \ar[dr]_-\alpha\ar@{-->}[rr]^-\varphi& & Z' \ar[dl]^-{\alpha'}\\ 
&X &
}
$$
Then $K_{Z'}+\Delta^{<1}_{Z'}
\sim _{\mathbb R} -\Delta^{\geq 1}_{Z'}+\alpha'^*(K_X+\Delta)$ 
is nef over $X$. 
Of course, we put $\Delta_{Z'}=\varphi_*\Delta_Z$. 
We take a dlt blow-up 
$\beta:Y\to Z'$ of $(Z', \Delta^{<1}_{Z'}+\Supp \Delta^{\geq 1}_{Z'})$ again 
(see \cite[Theorem 4.4.21]{fujino-foundations}) and put $g:=
\alpha'\circ \beta:Y\to X$. 
It is not difficult to see that this birational morphism $g:Y\to X$ with 
$K_Y+\Delta_Y:=g^*(K_X+\Delta)$ satisfies the desired properties. 
It is obvious that $g^{-1}\Nklt(X, \Delta)$ contains the 
support of $\beta^*\Delta^{\geq 1}_{Z'}$. 
Since $-\beta^*\Delta^{\geq 1}_{Z'}$ is nef 
over $X$, we see that $\beta^*\Delta^{\geq 1}_{Z'}$ coincides with 
$g^{-1}\Nklt(X, \Delta)$ set theoretically. 
For the details, see the proof of \cite[Theorem 3.4]{svaldi}. 
\end{proof}

By combining Proposition \ref{v-prop7.1} with 
Lemma \ref{v-lem7.2}, we obtain: 

\begin{cor}[{\cite[Corollary 5.3]{svaldi}}]\label{v-cor7.3}
Let $X$ be a normal variety and let $\Delta$ 
be a boundary $\mathbb R$-divisor 
on $X$ such that $K_X+\Delta$ is $\mathbb R$-Cartier. 
Let $\pi:X\to S$ be a projective morphism 
onto a scheme $S$. 
Assume that there is no non-constant 
morphism $f:\mathbb A^1\to X\setminus 
\Nklt(X, \Delta)$ such that  
$\pi\circ f(\mathbb A^1)$ is a point. 
Then $K_X+\Delta$ is nef over $S$ if and only if 
$(K_X+\Delta)|_{\Nklt(X, \Delta)}$ is nef over $S$.  
\end{cor}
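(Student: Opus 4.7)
The forward direction is immediate since the restriction of a divisor nef over $S$ is again nef over $S$, so the content is the converse. My plan is to bootstrap Proposition \ref{v-prop7.1} from a $\mathbb{Q}$-factorial dlt model back to $(X,\Delta)$ via Lemma \ref{v-lem7.2}, exactly as in the strategy of \cite{svaldi}.

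First, assuming $(K_X+\Delta)|_{\Nklt(X,\Delta)}$ is nef over $S$, I would apply Lemma \ref{v-lem7.2} to produce a projective birational morphism $g\colon Y\to X$ from a normal $\mathbb{Q}$-factorial variety with $K_Y+\Delta_Y=g^*(K_X+\Delta)$, a dlt companion $(Y,\Delta_Y')$, property (iii) that every $g$-exceptional divisor is a component of $(\Delta_Y')^{=1}$, and property (iv) that $\Nklt(Y,\Delta_Y)=g^{-1}\Nklt(X,\Delta)$ set-theoretically. Combining (i) and (iv), the pullback nefness hypothesis transports: $(K_Y+\Delta_Y)|_{\Nklt(Y,\Delta_Y)}$ is nef over $S$ because it is the pullback by $g|_{\Nklt(Y,\Delta_Y)}$ of $(K_X+\Delta)|_{\Nklt(X,\Delta)}$.

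Next, I would feed $(Y,\Delta_Y)$ into Proposition \ref{v-prop7.1} with respect to the composed morphism $\pi\circ g\colon Y\to S$. The proposition yields a dichotomy: either $K_Y+\Delta_Y$ is nef over $S$, or there is a non-constant morphism $f'\colon \mathbb{A}^1\to Y\setminus \Nklt(Y,\Delta_Y)$ with $\pi\circ g\circ f'(\mathbb{A}^1)$ a point. In the second alternative, properties (iii) and (iv) of Lemma \ref{v-lem7.2} together show that the $g$-exceptional locus $\Exc(g)$, which is purely divisorial since $Y$ is $\mathbb{Q}$-factorial, is supported inside $\Nklt(Y,\Delta_Y)$. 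Hence $g$ is an isomorphism on $Y\setminus \Nklt(Y,\Delta_Y)$ onto its image in $X\setminus \Nklt(X,\Delta)$, so the composition $f:=g\circ f'\colon \mathbb{A}^1\to X\setminus \Nklt(X,\Delta)$ is a non-constant morphism contracted by $\pi$, contradicting the standing hypothesis of the corollary. Therefore $K_Y+\Delta_Y$ must be nef over $S$, and by the projection formula applied to strict transforms of curves in fibres of $\pi$, the equality $K_Y+\Delta_Y=g^*(K_X+\Delta)$ descends nefness to $K_X+\Delta$ itself.

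The main technical point to be careful about is the descent of the non-constant $\mathbb{A}^1$ from $Y$ to $X$ in the second alternative: one must ensure both that $g\circ f'$ remains non-constant (guaranteed because $g$ is birational and its exceptional locus is contained in $\Nklt(Y,\Delta_Y)$, so $g$ restricts to an injection on the locus traced out by $f'$) and that its image avoids $\Nklt(X,\Delta)$ (which uses (iv) directly). Once this is in place, the argument is a clean reduction. I do not expect any serious obstacle beyond correctly verifying these set-theoretic containments.
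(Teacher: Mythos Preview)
Your proposal is correct and follows essentially the same route as the paper: pass to the model of Lemma \ref{v-lem7.2}, apply Proposition \ref{v-prop7.1} there, and push the resulting $\mathbb A^1$ back down via (iv). Two small remarks: Lemma \ref{v-lem7.2} assumes $X$ is quasi-projective, so you should first shrink $S$ (the assertion being local on the base) to arrange this, as the paper does; and for the non-contraction of $g\circ f'$, the paper argues instead via $-(K_Y+\Delta_Y)\cdot C>0$ together with $K_Y+\Delta_Y=g^*(K_X+\Delta)$, whereas your argument that $\Exc(g)\subset\Nklt(Y,\Delta_Y)$ using (iii) and (iv) is an equally valid alternative.
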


\begin{proof}
If $K_X+\Delta$ is nef over $S$, then 
$(K_X+\Delta)|_{\Nklt(X, \Delta)}$ is obviously nef over 
$S$. 
Therefore, it is sufficient to construct 
a non-constant morphism 
$f:\mathbb A^1\to X\setminus \Nklt(X, \Delta)$ such that 
$\pi\circ f(\mathbb A^1)$ is a point under the 
assumption that 
$(K_X+\Delta)|_{\Nklt(X, \Delta)}$ is nef over $S$ and that 
$K_X+\Delta$ is not nef over $S$. 
By shrinking $S$ suitably, 
we may assume that 
$X$ and $S$ are both quasi-projective. 
By Lemma \ref{v-lem7.2}, we can construct a 
projective birational morphism 
$g:Y\to X$ from a normal $\mathbb Q$-factorial 
variety $Y$ satisfying 
(i), (ii), and (iv) in Lemma \ref{v-lem7.2}. 
Let us consider $\pi\circ g: Y\to S$. 
Note that $K_Y+\Delta_Y$ is not nef over $S$ 
since $K_Y+\Delta_Y=g^*(K_X+\Delta)$. 
It is obvious that $(K_Y+\Delta_Y)|_{\Nklt(Y, \Delta_Y)}$ is nef 
over $S$ by (iv) because so is $(K_X+\Delta)|_{\Nklt(X, \Delta)}$. 
Therefore, by Proposition \ref{v-prop7.1}, 
we have a non-constant morphism 
$h:\mathbb A^1\to Y\setminus \Nklt (Y, \Delta_Y)$ such that 
$(\pi\circ g)\circ h(\mathbb A^1)$ is a point. 
By Proposition \ref{v-prop7.1}, 
we have $0<-(K_Y+\Delta_Y)\cdot C\leq 2\dim Y=2\dim X$, 
where $C$ is the closure of $h(\mathbb A^1)$ in $Y$. 
Since $K_Y+\Delta_Y=h^*(K_X+\Delta)$ holds, $g$ does not 
contract $C$ to a point. 
This implies that 
$f:=g\circ h: \mathbb A^1\to X\setminus \Nklt(X, \Delta)$ 
is a desired non-constant morphism 
such that $\pi\circ f(\mathbb A^1)$ is a point by 
(iv). 
\end{proof}

We introduce the notion of {\em{open qlc strata}} 
in order to state the main result of this section 
(see Theorem \ref{v-thm7.5} below). 

\begin{defn}[Open qlc strata]\label{v-def7.4}
Let $W$ be a qlc stratum of a quasi-log canonical 
pair $[X, \omega]$. 
We put 
$$
U:=W\setminus \bigcup _{W'} W', 
$$ 
where $W'$ runs over qlc strata of $[X, \omega]$ strictly 
contained in $W$, and 
call it the {\em{open qlc stratum}} 
of $[X, \omega]$ associated to $W$. 
\end{defn}

The following theorem is the main result of this section, which 
is a generalization of \cite[Theorem 1.1]{svaldi} (see 
also \cite{lz}). 

\begin{thm}[{cf.~\cite[Theorem 1.1]{svaldi}}]\label{v-thm7.5}
Let $[X, \omega]$ be a quasi-log canonical pair 
and let $\pi:X\to S$ be a projective morphism 
onto a scheme $S$. 
Assume that for all open qlc strata $U$ of 
$[X, \omega]$ there is no non-constant morphism 
$f:\mathbb A^1\to U$ such that 
$\pi\circ f(\mathbb A^1)$ is a point. 
Then $\omega$ is nef over $S$. 
\end{thm}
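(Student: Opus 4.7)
The plan is to argue by induction on $\dim X$, reducing to a normal irreducible situation to which Theorem \ref{a-thm1.1} (subadjunction) and Corollary \ref{v-cor7.3} can both be applied. First, I would verify the routine bookkeeping that for any qlc stratum $W$ of $[X,\omega]$ the hypothesis of the theorem passes to the induced qlc pair $[W,\omega|_W]$: by adjunction (see \cite[Theorem 6.3.5 (i)]{fujino-foundations}), the qlc strata of $[W,\omega|_W]$ are precisely the qlc strata of $[X,\omega]$ contained in $W$, so the open qlc strata of $[W,\omega|_W]$ are exactly the open qlc strata of $[X,\omega]$ contained in $W$. In particular, since each irreducible component $X_i$ of $X$ is itself a qlc stratum, it is enough to prove that $\omega|_{X_i}$ is nef over $S$ for each $i$. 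Replacing $X_i$ by its normalization and invoking \cite[Theorem 1.1]{fujino-haidong} (to transfer the qlc structure, the stratification, and the hypothesis through the finite morphism $\nu:X_i^\nu\to X_i$), we may assume that $X$ itself is a normal irreducible variety.

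Now, by the induction hypothesis applied to each qlc center $W$ of $[X,\omega]$ (which has $\dim W<\dim X$), $\omega|_W$ is nef over $S$; since $\Nqklt(X,\omega)$ is the union of the qlc centers, $\omega|_{\Nqklt(X,\omega)}$ is nef over $S$. After shrinking $S$, we may assume that $X$ and $S$ are quasi-projective. Fix an ample $\mathbb R$-divisor $H$ on $X$. For every sufficiently small $\varepsilon>0$, Theorem \ref{a-thm1.1} applied to $W=X$ produces a boundary $\mathbb R$-divisor $\Delta_\varepsilon$ on $X$ with
$$K_X+\Delta_\varepsilon\sim_{\mathbb R}\omega+\varepsilon H, \qquad \Nklt(X,\Delta_\varepsilon)=\Nqklt(X,\omega)$$
(set-theoretically), so that $(K_X+\Delta_\varepsilon)|_{\Nklt(X,\Delta_\varepsilon)}=(\omega+\varepsilon H)|_{\Nqklt(X,\omega)}$ is nef over $S$.

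The hypothesis of the theorem, applied to the open qlc stratum $X\setminus\Nqklt(X,\omega)=X\setminus\Nklt(X,\Delta_\varepsilon)$ associated to $W=X$, asserts that there is no non-constant morphism $f:\mathbb A^1\to X\setminus\Nklt(X,\Delta_\varepsilon)$ with $\pi\circ f(\mathbb A^1)$ a point. Corollary \ref{v-cor7.3}, applied to $(X,\Delta_\varepsilon)$ over $S$, therefore forces $K_X+\Delta_\varepsilon$ to be nef over $S$, that is, $\omega+\varepsilon H$ is nef over $S$ for every sufficiently small $\varepsilon>0$. Letting $\varepsilon\to 0$ yields that $\omega$ is nef over $S$.

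The main obstacle I anticipate is the reduction step: carefully justifying that the hypothesis on open qlc strata descends cleanly both to $[W,\omega|_W]$ for every qlc stratum $W$ (needed to run the induction on dimension) and through the normalization of an irreducible component (needed to reach the normal setting). Once that bookkeeping is in place, the remainder combines Theorem \ref{a-thm1.1} with Corollary \ref{v-cor7.3} essentially mechanically, with the perturbation by $\varepsilon H$ playing the same role as in the proof of Theorem \ref{a-thm1.8}.
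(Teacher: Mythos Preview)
Your proposal is correct and follows the same strategy as the paper (induction on $\dim X$, reduction to an irreducible component by adjunction, normalization via \cite[Theorem 1.1]{fujino-haidong}, then Theorem \ref{a-thm1.1} combined with Corollary \ref{v-cor7.3}). The one difference worth noting, since it dissolves the obstacle you flagged: the paper applies the induction hypothesis to $[\Nqklt(X,\omega),\omega|_{\Nqklt(X,\omega)}]$ \emph{before} normalizing rather than after. With that order, once one passes to $X^\nu$ the only fact needed from \cite{fujino-haidong} is the equality $\Nqklt(X^\nu,\nu^*\omega)=\nu^{-1}\Nqklt(X,\omega)$, and the only open stratum on the normalized side that must be shown $\mathbb A^1$-free is $X^\nu\setminus\Nqklt(X^\nu,\nu^*\omega)$, which $\nu$ carries into the open qlc stratum $X\setminus\Nqklt(X,\omega)$ of the original pair. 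Your ordering (normalize first, then run induction on the qlc centers of $[X^\nu,\nu^*\omega]$) would instead require transferring the \emph{entire} stratification through $\nu$; the paper's order sidesteps that bookkeeping entirely. (The paper also argues by contradiction with a single ample $H$ chosen so that $\nu^*\omega+H$ is still not nef, rather than letting $\varepsilon\to 0$, but that is cosmetic.)
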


\begin{proof}We divide the proof into five small steps. 

\begin{step}\label{v-7.5step1} 
We use induction on $\dim X$. 
If $\dim X=0$, then the statement is obvious. 
\end{step}

\begin{step}\label{v-7.5step2}
Let $X=\bigcup_{i\in I} X_i$ be the irreducible decomposition. 
Then $X_i$ is a qlc stratum of $[X, \omega]$ for 
every $i\in I$. 
By adjunction (see \cite[Theorem 6.3.5 (i)]{fujino-foundations}), 
$[X_i, \omega|_{X_i}]$ is a quasi-log canonical pair 
for every $i\in I$. 
We note that 
the qlc strata of $[X_i, \omega|_{X_i}]$ are 
exactly the qlc strata of $[X, \omega]$ 
contained in $X_i$ (see \cite[Theorem 6.3.5 (i)]{fujino-foundations}). 
Therefore, by replacing $[X, \omega]$ with $[X_i, \omega|_{X_i}]$, 
we may assume that $X$ is irreducible. 
\end{step}

\begin{step}\label{v-7.5step3}
By adjunction (see \cite[Theorem 6.3.5 (i)]{fujino-foundations}), 
$[\Nqklt (X, \omega), \omega|_{\Nqklt(X, \omega)}]$ becomes a quasi-log 
canonical pair whose qlc strata are exactly 
the qlc strata of $[X, \omega]$ 
contained in $\Nqklt(X, \omega)$. 
Therefore, by induction, 
$\omega|_{\Nqklt(X, \omega)}$ is nef over $S$. 
Therefore, it is sufficient to prove that $\omega$ is nef 
over $S$ under the assumption that $\omega|_{\Nqklt(X, \omega)}$ 
is nef over $S$. 
\end{step}

\begin{step}\label{v-7.5step4}
We take the normalization $\nu:X^\nu\to X$. 
Then, by \cite[Theorem 1.1]{fujino-haidong}, $[X^\nu, \nu^*\omega]$ 
naturally becomes a quasi-log canonical 
pair such that $\nu^{-1}\Nqklt(X, \omega)=
\Nqklt(X^\nu, \nu^*\omega)$ holds. 
We note that $\omega$ is nef over $S$ if and only if so is $\nu^*\omega$. 
\end{step}

\begin{step}\label{v-7.5step5}
We assume that $\omega$ is not nef over $S$. 
Without loss of generality, 
we may assume that $S$ is quasi-projective by shrinking 
$S$ suitably. 
Therefore, 
$X$ and $X^\nu$ are both quasi-projective. 
We take an ample $\mathbb Q$-divisor 
$H$ on $X^\nu$ such that 
$\nu^*\omega+H$ is not nef over $S$. 
By Theorem \ref{a-thm1.1}, 
we can take a boundary $\mathbb R$-divisor 
$\Delta$ on $X^\nu$ such that 
$K_{X^\nu}+\Delta\sim _{\mathbb R} \nu^*\omega+H$ and 
that $\Nklt (X^\nu, \Delta)=\Nqklt(X^\nu, \nu^*\omega) 
=\nu^{-1}\Nqklt(X, \omega)$. Thus $(K_{X^\nu}+\Delta)|_{\Nklt(X^\nu, 
\Delta)}$ is ample over $S$. Hence it is obviously 
nef over $S$. 
Since $K_{X^\nu}+\Delta$ is not nef over $S$, 
there exists a non-constant 
morphism $f:\mathbb A^1\to X^\nu \setminus \Nklt(X^\nu, \Delta)$ 
such that 
$(\pi\circ \nu)\circ f(\mathbb A^1)$ is a point by 
Corollary \ref{v-cor7.3}. 
Thus $\nu\circ f: \mathbb A^1
\to X\setminus \Nqklt(X, \omega)$ is a non-constant 
morphism 
such that $\pi\circ (\nu\circ f)(\mathbb A^1)$ is a point. 
This is a contradiction because 
$X\setminus \Nqklt(X, \omega)$ is an open 
qlc stratum of $[X, \omega]$. 
Therefore, $\omega$ is nef over $S$. 
\end{step} 
This is what we wanted. 
\end{proof}

As an obvious corollary of Theorem \ref{v-thm7.5}, we have: 

\begin{cor}\label{v-cor7.6}Let $[X, \omega]$ be a projective 
quasi-log canonical pair. 
Assume that $[X, \omega]$ is Mori hyperbolic, that is, 
for any open qlc stratum $U$, there is no non-constant 
morphism $f:\mathbb A^1\to U$. 
Then $\omega$ is nef. 
\end{cor}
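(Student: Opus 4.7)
The plan is to deduce this immediately from Theorem \ref{v-thm7.5} by specializing the base $S$ to a point. Since $[X, \omega]$ is projective, the structure morphism $\pi : X \to \Spec \mathbb C$ is projective, so Theorem \ref{v-thm7.5} applies in this setting.

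First I would unpack the Mori hyperbolicity hypothesis. For every open qlc stratum $U$ of $[X, \omega]$, there is no non-constant morphism $f : \mathbb A^1 \to U$ at all; a fortiori, there is no non-constant morphism $f : \mathbb A^1 \to U$ whose image in $S = \Spec \mathbb C$ is a point (the image is automatically a point here). Hence the hypothesis of Theorem \ref{v-thm7.5} is satisfied with $S = \Spec \mathbb C$.

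Applying Theorem \ref{v-thm7.5}, I conclude that $\omega$ is nef over $\Spec \mathbb C$, which is precisely the statement that $\omega$ is nef. No further argument is needed, and indeed no real obstacle arises since the corollary is a direct specialization; the substance of the result is already packaged into Theorem \ref{v-thm7.5}, whose proof uses induction on $\dim X$, adjunction for qlc pairs, normalization via \cite{fujino-haidong}, the subadjunction Theorem \ref{a-thm1.1}, and the bend-and-break style Corollary \ref{v-cor7.3}.
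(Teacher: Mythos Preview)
Your proof is correct and matches the paper's approach exactly: the paper presents this as ``an obvious corollary of Theorem \ref{v-thm7.5}'' without further argument, and your specialization to $S = \Spec \mathbb C$ is precisely the intended one-line deduction.
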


We give a slight generalization of the cone theorem 
for quasi-log canonical pairs. For log canonical 
pairs, it is nothing but \cite[Theorem 1.2]{svaldi}. 
Of course, Theorem \ref{v-thm7.5} can be seen as a special 
case of Theorem \ref{v-thm7.7}. 

\begin{thm}[Cone theorem for quasi-log canonical 
pairs]\label{v-thm7.7}
Let $[X, \omega]$ be a quasi-log canonical pair and let $\pi:X\to S$ 
be a projective morphism onto a scheme $S$. 
Then we have 
$$
\overline{NE}(X/S)=\overline{NE}(X/S)_{\omega\geq 0} +\sum _j R_j, 
$$ 
where 
\begin{itemize}
\item[(i)] $R_j$ is spanned by a rational curve $C_j$ such that 
$\pi(C_j)$ is a point with 
$$
0<-\omega\cdot C_j \leq 2 \dim X, 
$$ 
and 
\item[(ii)] there exists an open qlc stratum $U$ of $[X, \omega]$ such that  
$C_j\cap U$ contains the image of a non-constant 
morphism $f:\mathbb A^1\to U$. 
\end{itemize}
\end{thm}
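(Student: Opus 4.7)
The decomposition $\overline{NE}(X/S)=\overline{NE}(X/S)_{\omega\geq 0}+\sum_j R_j$ is already the content of \cite[Theorem 6.7.4]{fujino-foundations}, so the plan is to produce, for each extremal ray $R_j$ separately, a single rational curve $C_j$ witnessing both (i) and (ii) at once. The approach is to combine the reduction scheme used in the proof of Theorem \ref{a-thm1.8} with the $\mathbb A^1$-producing machinery of Proposition \ref{v-prop7.1}, bridging them through the dlt blow-up of Lemma \ref{v-lem7.2} and the subadjunction of Theorem \ref{a-thm1.1}.

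First, fix $R_j$ and replace $\pi:X\to S$ by the extremal contraction $\varphi_{R_j}:X\to Y$ associated to $R_j$, so that $-\omega$ becomes $\pi$-ample. Following the proof of Theorem \ref{a-thm1.8}, choose a qlc stratum $W$ of $[X,\omega]$ that is minimal among those for which $\pi|_W$ is not finite; then $\pi|_{\Nqklt(W,\omega|_W)}$ is automatically finite. By adjunction (see \cite[Theorem 6.3.5 (i)]{fujino-foundations}) together with \cite[Theorem 1.1]{fujino-haidong}, replace $[X,\omega]$ by $[W^\nu,\nu^*(\omega|_W)]$. After these reductions $X$ is normal, irreducible, and quasi-projective, with $\pi|_{\Nqklt(X,\omega)}$ finite and $-\omega$ still $\pi$-ample; the open qlc stratum $U:=X\setminus\Nqklt(X,\omega)$ associated to $X$ (viewed as a qlc stratum of itself) will pull back to the open qlc stratum of the original $[X,\omega]$ associated to the chosen $W$.

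Next, fix an ample Cartier divisor $H$ on $X$. For every sufficiently small $\varepsilon>0$, Theorem \ref{a-thm1.1} yields a boundary $\mathbb R$-divisor $\Delta_\varepsilon$ on $X$ with $K_X+\Delta_\varepsilon\sim_{\mathbb R}\omega+\varepsilon H$ and $\Nklt(X,\Delta_\varepsilon)=\Nqklt(X,\omega)$. Applying Lemma \ref{v-lem7.2} produces a $\mathbb Q$-factorial dlt model $g_\varepsilon:Y_\varepsilon\to X$ with $K_{Y_\varepsilon}+\Delta_{Y_\varepsilon}=g_\varepsilon^*(K_X+\Delta_\varepsilon)$ and $g_\varepsilon^{-1}\Nqklt(X,\omega)=\Nklt(Y_\varepsilon,\Delta_{Y_\varepsilon})$ set-theoretically. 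Since $\pi\circ g_\varepsilon$ is finite on $\Nklt(Y_\varepsilon,\Delta_{Y_\varepsilon})$, the restriction $(K_{Y_\varepsilon}+\Delta_{Y_\varepsilon})|_{\Nklt(Y_\varepsilon,\Delta_{Y_\varepsilon})}$ is vacuously nef over $S$, while $K_{Y_\varepsilon}+\Delta_{Y_\varepsilon}$ itself is not nef over $S$ because $-(\omega+\varepsilon H)$ is $\pi$-ample. Proposition \ref{v-prop7.1} then supplies a non-constant morphism $h_\varepsilon:\mathbb A^1\to Y_\varepsilon\setminus\Nklt(Y_\varepsilon,\Delta_{Y_\varepsilon})$, contracted by $\pi\circ g_\varepsilon$, whose image closure $D_\varepsilon$ is a rational curve satisfying $0<-(K_{Y_\varepsilon}+\Delta_{Y_\varepsilon})\cdot D_\varepsilon\leq 2\dim X$. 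Because $g_\varepsilon$ is crepant, it cannot contract $D_\varepsilon$; setting $C_\varepsilon:=(g_\varepsilon)_*D_\varepsilon$ and $f_\varepsilon:=g_\varepsilon\circ h_\varepsilon$ gives a rational curve $C_\varepsilon\subset U$ containing $f_\varepsilon(\mathbb A^1)$ and satisfying $0<-(\omega+\varepsilon H)\cdot C_\varepsilon\leq 2\dim X$.

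Finally, one closes by running the bounded-family argument from the proof of Theorem \ref{a-thm1.8}: pick an ample $\mathbb Q$-divisor $A$ on $X$ with $-(\omega+A)$ $\pi$-ample and a sequence $\varepsilon_i\to 0$; then the intersection numbers $A\cdot C_{\varepsilon_i}$ remain uniformly bounded, so the $C_{\varepsilon_i}$ lie in a bounded family, and after passing to a subsequence one may assume $C_{\varepsilon_i}=C$ is constant. Passing to the limit yields $0<-\omega\cdot C\leq 2\dim X$, some $f_{\varepsilon_i}:\mathbb A^1\to U$ realizes (ii), and reversing the reductions through $\nu$ and the passage from $X$ to $W$ transports the pair $(C,f)$ back to the original $[X,\omega]$ while turning $U$ into the open qlc stratum associated to $W$. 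The main obstacle will be extracting (i) and (ii) from one and the same curve: the length bound alone is available from Theorem \ref{a-thm1.8}, and the $\mathbb A^1$-statement alone from Proposition \ref{v-prop7.1} on a suitable dlt model, but producing both simultaneously requires that the dlt-model curves $D_\varepsilon$ push down faithfully (handled by crepancy) and that the $\mathbb A^1$-parametrization persists through the Chow-theoretic limit (handled by the uniform estimate against $A$).
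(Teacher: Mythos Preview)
Your proposal is correct and follows essentially the same approach as the paper: the paper's sketch simply says to rerun the proof of Theorem \ref{a-thm1.8}, replacing the invocation of \cite[Theorem 4.6.7]{fujino-foundations} by the proof of Corollary \ref{v-cor7.3} (which is exactly Lemma \ref{v-lem7.2} followed by Proposition \ref{v-prop7.1}, as you spell out), and then to run the same bounded-family limiting argument. One small slip: you write ``a rational curve $C_\varepsilon\subset U$'', but in Cases \ref{v-7.1case1} and \ref{v-7.1case3} of Proposition \ref{v-prop7.1} the curve may meet $\Nklt$ in a single point, so only $f_\varepsilon(\mathbb A^1)\subset C_\varepsilon\cap U$ is guaranteed; this is harmless for the argument, since that inclusion is exactly what (ii) requires.
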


\begin{proof}[Sketch of Proof of Theorem \ref{v-thm7.7}]
In this proof, we only explain how to 
modify the proof of Theorem \ref{a-thm1.8}. 
So we will use the same notation as in the proof of 
Theorem \ref{a-thm1.8}. 
By construction, $(K_X+\Delta_\varepsilon)|_{\Nklt(X, \Delta_\varepsilon)}$ 
is obviously nef over $S$ since 
$\pi: \Nqklt(X, \omega)\to \pi(\Nqklt(X, \omega))$ is 
finite and $\Nklt(X, \Delta_\varepsilon)=
\Nqklt(X, \omega)$. 
Therefore, by the proof of Corollary \ref{v-cor7.3} 
(see also Proposition \ref{v-prop7.1}), 
we can take a non-constant morphism 
$f_\varepsilon: \mathbb A^1\to X\setminus \Nqklt(X, \omega)$ such that 
$C_\varepsilon$, the closure of $f_\varepsilon(\mathbb A^1)$, 
is a rational curve on $X$ such that 
$\pi(C_\varepsilon)$ is a point and that 
$0<-(K_X+\Delta_\varepsilon)\cdot C_\varepsilon\leq 2\dim X$. 
As in the proof of Theorem \ref{a-thm1.8}, 
we finally get a rational curve $C_j$ spanning $R_j$ with 
the desired properties. 
\end{proof}

We close this section with a sketch of the proof of Theorem \ref{a-thm1.10}. 

\begin{proof}[Sketch of Proof of Theorem \ref{a-thm1.10}]
We note that the cone and contraction theorem holds 
for semi-log canonical pairs by 
\cite[Theorem 1.19]{fujino-slc}. 
Let $R$ be a $(K_X+\Delta)$-negative extremal ray. By replacing 
$\pi:X\to S$ with the extremal contraction $\varphi_R: X\to Y$ over $S$ 
associated to $R$ and shrinking $S$ suitably, we may assume that 
$-(K_X+\Delta)$ is $\pi$-ample and that $X$ and $S$ are quasi-projective. 
Then, by Theorem \ref{p-thm2.9}, $[X, K_X+\Delta]$ naturally 
becomes a quasi-log canonical pair such that 
$V$ is a qlc stratum of $[X, K_X+\Delta]$ if and only if it is 
an slc stratum of $(X, \Delta)$. 
By the above sketch of the 
proof of Theorem \ref{v-thm7.7}, we can check that 
there exists a possibly singular rational curve $C$ with the desired 
properties. 
\end{proof}

Alternatively, in Theorem \ref{a-thm1.10}, 
we can take the normalization of $X$ and 
reduce the problem to the case where $(X, \Delta)$ is log canonical. 
Then we can apply \cite[Theorems 1.2 and 6.5]{svaldi} to find a desired 
rational curve $C$. 
%%%%%%%%%%%%%%%

\end{document}